
\documentclass[11pt,british,a4paper]{amsart}

\IfFileExists{test.sty}{\usepackage[print]{tms}}{
\usepackage[dvipsnames]{xcolor}
\definecolor{url}{RGB}{0,113,185} 
\definecolor{link}{RGB}{160,0,0} 
\PassOptionsToPackage{ocgcolorlinks=true,citecolor=link,linkcolor=link,urlcolor=url,backref=page}{hyperref}
\PassOptionsToPackage{paperwidth=460pt,paperheight=700pt,height=590pt,width=360pt}{geometry}
}
\usepackage{mlmodern,dsfont}
\let\mathbb\mathds

\usepackage{microtype,amssymb,mathtools,hyperref}
\usepackage[T1]{fontenc}
\usepackage[marginratio=1:1]{geometry}
\usepackage[capitalize]{cleveref}
\usepackage[dvipsnames]{xcolor}
\usepackage{graphicx}

\numberwithin{equation}{section}

\newtheorem{thm}{Theorem}[section]
\newtheorem{cor}[thm]{Corollary}
\newtheorem{ppn}[thm]{Proposition}
\newtheorem{lem}[thm]{Lemma}
\theoremstyle{remark}
\newtheorem{rmk}[thm]{Remark}
\newtheorem{ex}[thm]{Example}
\theoremstyle{definition}
\newtheorem{defn}[thm]{Definition}

\renewcommand{\Re}{\mathrm{Re}}
\newcommand{\e}{\mathrm{e}}
\renewcommand{\i}{\mathrm{i}}
\newcommand{\C}{\mathbb{C}}
\newcommand{\R}{\mathbb{R}}
\renewcommand{\d}{\mathrm{d}}
\newcommand{\con}{{}^{[g]}\nabla}
\newcommand{\gnabla}{{}^g\nabla}
\newcommand{\bx}{\mathbf{X}}
\newcommand{\E}{\mathbb{E}}
\newcommand{\tr}{\operatorname{tr}}
\newcommand{\seq}{\left(P_{\ell}\right)_{\ell \geqslant 2}}

\title[The degree of a classical minimal surface]{The Schwarzian derivative and the degree of a classical minimal surface}

\author[T.~Mettler]{Thomas Mettler}
\author[L.~Poerschke]{Lukas Poerschke}
\address{Faculty of Mathematics and Computer Science, UniDistance Suisse, Brig, Switzerland}
\email{thomas.mettler@fernuni.ch, mettler@math.ch}
\address{Faculty of Mathematics and Computer Science, UniDistance Suisse, Brig, Switzerland}
\email{lukpoerschke@gmail.com}



\date{22nd July 2024}

\begin{document}

\begin{abstract}
Using the Schwarzian derivative we construct a sequence $\seq$ of mero\-mor\-phic differentials on every non-flat oriented minimal surface in Euclidean $3$-space. The  differentials $\seq$ are invariant under all deformations of the surface arising via the Weierstrass representation and depend on the induced metric and its derivatives only. A minimal surface is said to have degree $n$ if its $n$-th differential is a polynomial expression in the differentials of lower degree. We observe that several well-known minimal surfaces have small degree, including Enneper's surface, the helicoid/catenoid and the Scherk -- as well as the Schwarz family. Furthermore, it is shown that locally and away from umbilic points every minimal surface can be approximated by a sequence of minimal surfaces of increasing degree.     
\end{abstract}

\maketitle

\section{Introduction}

In~\cite{MR3667429}, a meromorphic quadratic differential $P_2$ is introduced on every non-flat oriented minimal surface $M$ in Euclidean $3$-space $\E^3$. The differential $P_2$ arises as a conservation law for a certain curvature entropy functional and is hence called the~\textit{entropy differential}. In this note we show that $P_2$ is the first element in a geometrically natural sequence $\seq$ of meromorphic differentials on $M$, where $P_{\ell}$ has degree $\ell$, that is, $P_{\ell}$ is a section of the $\ell$-th tensorial power of the canonical bundle of $M$. The entropy differentials $\seq$ arise as certain higher order Schwarzian derivatives of the stereographically projected Gauss map $G : M \to \mathbb{C}\cup \left\{\infty\right\}$; where we compute the Schwarzian derivative and its higher order descendants with respect to the Levi-Civita connection of the flat metric $\sqrt{-K}g$. 
Here $g$ denotes the induced metric on $M$ and $K\leqslant 0$ its Gauss curvature. The differentials $\seq$ satisfy the following properties:
 
\begin{thm}\label{mainthma}
Let $\bx : M\to \E^3$ be a non-flat minimal immersion of the oriented surface $M$. Denoting by $\seq$ the induced differentials, we have:
\begin{itemize}
\item[(i)] the differential $P_{\ell}$ is holomorphic away from umbilic points and extends mero\-mor\-phi\-cal\-ly to all of $M$ with a pole of order $\ell$ at the umbilic points;
\item[(ii)] the differential $P_{\ell}$ depends on the induced metric $g$ and its derivatives up to order $(\ell+4)$ only. In particular, the differentials $\seq$ are the same for all members of the associated family of minimal immersions of $\bx : M \to \E^3$;  
\item[(iii)] the differentials $\seq$ are invariant under Goursat transforms of the minimal immersion $\bx : M \to \E^3$; 
\item[(iv)] the differentials $\seq$ are invariant under (constant) rescaling of $\bx : M \to \E^3$. 
\end{itemize}
\end{thm}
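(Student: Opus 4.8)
The plan is to reduce every assertion to a computation in a local \emph{flat coordinate}. Away from umbilic points $\sqrt{-K}g$ is flat, so I would choose a holomorphic coordinate $w$ with $\sqrt{-K}g=\d w\,\d\bar w$; in such a coordinate $\con$ is the trivial connection, and the covariant Schwarzian descendants defining $\seq$ collapse to the ordinary higher Schwarzian derivatives of $G$ regarded as a function of $w$. Thus $P_\ell=\pi_\ell(G)\,\d w^\ell$ for a universal differential polynomial $\pi_\ell$ in $G',G'',\dots$. Two features of the $\pi_\ell$ will be used throughout: each is invariant under post-composition $G\mapsto (aG+b)/(cG+d)$ by Möbius transformations (being built from the projective structure carried by $G$), and, relative to the fixed flat projective structure, $P_\ell$ is a genuine $\ell$-differential. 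Writing the Weierstrass data of $\bx$ as $(G,f\,\d z)$ in an isothermal coordinate $z$ for $g$, one computes $\sqrt{-K}g=|G'f|\,|\d z|^2$, so the flat structure is controlled by the single meromorphic function $G'f$, and the umbilic points are precisely the zeros of $G'$.

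With this in hand the invariance statements become transparent. For (iv), a constant rescaling $\bx\mapsto c\,\bx$ multiplies $g$ by $c^2$ and hence $\sqrt{-K}g$ by the constant $c$, which leaves the Levi-Civita connection $\con$---and the Gauss map $G$---unchanged, so every $P_\ell$ is unchanged. For (iii), I would recall that a Goursat transform acts by $G\mapsto (aG+b)/(cG+d)$ and $f\mapsto (cG+d)^2f$, whence $G'f\mapsto (ad-bc)\,G'f$; thus $\sqrt{-K}g$ changes only by the constant factor $|ad-bc|$, again leaving $\con$ fixed, while $G$ is altered by a Möbius transformation, under which the $\pi_\ell$ are invariant. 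Finally, the members of the associated family share the same $G$ and replace $f$ by $\e^{\i\theta}f$, so $|G'f|$ and hence $g$ itself are preserved; the asserted coincidence of the differentials along the associated family will therefore follow at once from the intrinsic description established for (ii).

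For (ii) the key is to express $P_\ell$ through the induced metric alone. Since $III=-Kg$ is the pullback under $G$ of the round metric, in the flat coordinate it reads $III=\e^{2\psi}\,\d w\,\d\bar w$ with $\psi=\tfrac14\log(-K)$, and $G$ is exactly the developing map of this metric. The classical identity $S_w(G)=2(\psi_{ww}-\psi_w^2)$, relating the conformal factor of a pullback of the round metric to the Schwarzian of the map, then exhibits $P_2$ as a polynomial in the $\con$-covariant derivatives of $\psi$, and I would show by induction that the same holds for every $P_\ell$. As $K$ is intrinsic by the Theorema Egregium, this makes each $P_\ell$ depend on $g$ alone, and counting the derivatives of $K$ and of $\con$ that enter yields the bound $\ell+4$; the isometry invariance recorded above then gives the ``in particular'' clause.

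It remains to analyse the singularities, which I regard as the main obstacle. Away from umbilic points $P_\ell=\pi_\ell(G)\,\d w^\ell$ is holomorphic wherever $G$ is immersive and finite, and holomorphicity at the poles of $G$ follows from Möbius invariance (post-compose with $\zeta\mapsto 1/\zeta$). The delicate point is the exact pole order at an umbilic point $p$, where $G'$ vanishes to some order $m\geqslant 1$: there the flat coordinate degenerates as $w\sim z^{(m+2)/2}$ and $G-G(p)\sim z^{m+1}$, so in the flat coordinate $G$ behaves like $w^\alpha$ with $\alpha=2(m+1)/(m+2)$. Here I would use a homogeneity argument: since $\zeta\mapsto\lambda^\alpha\zeta$ is a Möbius transformation, the $\ell$-differential $\pi_\ell(w^\alpha)\,\d w^\ell$ must be invariant under $w\mapsto\lambda w$, forcing $\pi_\ell(w^\alpha)=c_\ell(\alpha)\,w^{-\ell}$; transforming back by $\d w\sim z^{m/2}\,\d z$ the branch exponents cancel and one obtains a pole of order exactly $\ell$ in $z$, independent of $m$. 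The remaining work is to verify that the leading coefficients $c_\ell(\alpha)$ are nonzero for the relevant $\alpha$ (for $\ell=2$ one has $c_2(\alpha)=(1-\alpha^2)/2\neq 0$) and that the sub-leading terms of $G$ and $f$ contribute only lower-order poles; this, together with establishing the intrinsic recursion underlying (ii), is where the substantive computation lies.
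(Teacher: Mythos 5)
Your treatment of (ii)--(iv) is correct, and it is in substance the paper's own argument in Weierstrass-data clothing. Your flat coordinate $w$ is exactly the paper's $Q$-adapted coordinate: writing $Q=q\,\d z^2$ one has $g_0=\sqrt{-K}g=|q|\,\d z\circ\d\bar z$, so your observation that the flat structure is governed by $G'f$ is precisely the identity $g_0=|Q|$ underlying \cref{levihopf=leviflat}, and your computation $G'f\mapsto(ad-bc)\,G'f$ under a Goursat transform is the invariance of the Hopf differential that the paper invokes (so ${}^{g_0}\nabla={}^A\nabla$ is untouched); one small correction: the umbilic locus is the zero set of $Q$, i.e.\ of $G'f$, not of $G'$ alone (poles of $G$ of order $\geqslant 2$ are umbilic). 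Your identity $\mathrm{S}_w(G)=2(\psi_{ww}-\psi_w^2)$ with $\psi=\tfrac14\log(-K)$ is \eqref{entropycoord} with $u=-\psi$, and no induction is needed for the higher differentials: $P_{\ell+2}=(\nabla^{1,0})^{\ell}P_2$ with $\nabla^{1,0}$ intrinsic, which is how the paper gets the order-$(\ell+4)$ count in \cref{mainthmp2}. Note also that the M\"obius invariance of your $\pi_\ell$ is not a generic property of ``higher Schwarzians'' (the operators $\mathrm{S}^{\mathrm{H}}_\ell$ of \cref{appa} lack it); it holds here because $P_{\ell+1}=(P_\ell)'$ with respect to a connection that the transformation leaves fixed, which is what your argument implicitly uses.

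The genuine gap is the step you yourself defer in (i). Your homogeneity argument is viable: for the model $G=w^{\alpha}$ with trivial connection one gets $c_2(\alpha)=\tfrac12(1-\alpha^2)$ and, since $P_{\ell+1}=\partial_w p_\ell\,\d w^{\ell+1}$, $c_\ell(\alpha)=(-1)^{\ell}(\ell-1)!\,c_2(\alpha)\neq 0$ because $\alpha=2(n+1)/(n+2)\in(1,2)$; converting by $w\sim\tfrac{2}{n+2}z^{(n+2)/2}$ even reproduces the residue $(-\tfrac12)^{\ell+1}(\ell-1)!\,(n+2)^{\ell-2}(3n^2+4n)$ of \cref{mainthmp1}, so the nonvanishing you flag is easy. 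What is \emph{not} automatic is your claim that the sub-leading terms of $G$ and $f$ only contribute lower-order poles. First, $w\sim z^{(n+2)/2}$ is multivalued for odd $n$, so ``$\pi_\ell(G)$ as a function of $w$'' only lives on a branched cover and you must argue that the resulting $\ell$-differential descends. Second, differentiation raises pole orders, so sub-leading terms could in principle feed the top-order coefficient after $\ell-2$ derivatives; ruling this out requires a graded bookkeeping, namely that a term $z^{k-\ell}$ with $k\geqslant 1$ in $p_\ell$ stays sub-leading under $p_{\ell+1}=\partial_z p_\ell-\ell\gamma p_\ell$ with $\gamma=\partial_z q/(2q)=\tfrac{n}{2z}+O(1)$. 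This is exactly what the paper does, sidestepping the branched coordinate entirely: it computes in the single-valued coordinate $z$ using \eqref{schwarzwithq} and the normal form $G=1+az^{n+1}+bz^{n+2}+\cdots$, obtains \eqref{polecoord}, and runs the induction \eqref{someid15} tracking coefficients $c_{\ell,k}z^{k-\ell}(b/a)^k$, where only the $k=0$ chain reaches the order-$\ell$ pole, with recursion $c_{\ell+1,0}=-\tfrac{\ell(n+2)}{2}c_{\ell,0}$. Your plan closes once you carry out this perturbation analysis in the $z$-coordinate rather than asserting it in $w$.
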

Having the infinite sequence $\seq$ at hand, we may say that a minimal immersion $\bx : M \to \E^3$ has \textit{degree $n$} if $P_{n}$ vanishes identically or if $P_n$ is a polynomial expression in the lower order differentials, that is,
$$
P_{n}=\psi_n(P_2,\ldots,P_{n-1}),
$$
where $\psi_n$ is a weighted-homogeneous polynomial of degree $n$ which we call the~\textit{algebraic type} of the immersion. We observe that several classical minimal surfaces are surfaces of low degree. Using \cite{MR3667429}, it follows that -- up to Euclidean motion and the deformations listed in \cref{mainthma} -- open subsets of Enneper's surface are the only surfaces of degree $2$ and open subsets of the helicoid and catenoid are the only surfaces of degree $3$. A complete description of degree four surfaces is not feasible. However, we prove that if a degree four minimal surface admits an umbilic point, then it satisfies an equation of the form $P_4+a_n(P_2)^2=0$, where
$$
a_n=\frac{12(n+2)^2}{(3n^2+4n)}, \quad n \in \mathbb{N}. 
$$
The equation $P_4+a_n(P_2)^2=0$ is satisfied by the Enneper surfaces with higher dihedral symmetry group $D_{n+1}$. Furthermore, the sequence $a_n$ converges to $4$ as $n$ goes to infinity and we show that at this limit coefficient there exists a $1$-parameter family of immersed singly periodic degree four minimal deformations of the plane which keep the lines $\left\{z=0,y=\frac{k\pi}{2}\,|\, k \in \mathbb{Z}\right\}$ fixed and so that the induced metric admits a Killing vector field that is the real part of a holomorphic vector field. Likewise, a degree five minimal surface admitting an umbilic point satisfies $P_5+2a_nP_3P_2=0$. We show that the smallest admissible coefficient $216/7$ is realized by the Schwarz family (including the P- and D-surface and the Gyroid). Furthermore, the limit coefficient $8$ is realized by the Scherk family. Also, at degree seven we encounter the $k$-noids of Jorge \& Meeks~\cite{MR683761}. 


We also prove that locally and away from umbilic points a minimal surface can be approximated by a sequence of minimal surfaces of increasing degree. 

\begin{thm}\label{thmb}
Let $\bx : M \to \mathbb{E}^3$ be a non-flat minimal immersion of an oriented surface $M$. Then for every point $p$ in the complement $M^{\prime}$ of the umbilic locus there exists a neighbourhood $U_p$ and a sequence of minimal immersions $\bx_n : U_p \to \mathbb{E}^3$ with $\bx_n$ having degree $n$ so that $\lim_{n\to \infty}\Vert\bx_n-\bx\Vert_{g_{Eucl}}=0$ locally uniformly.  
\end{thm}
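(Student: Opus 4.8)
The plan is to reduce everything to the Weierstrass data in the distinguished flat coordinate and then to approximate the relevant Schwarzian by polynomials. Fix a point $p\in M^{\prime}$. Since $p$ is not umbilic, the holomorphic Hopf differential $Q$ (which satisfies $\sqrt{-K}\,g=|Q|$) is non-zero near $p$, so $w=\int\sqrt{Q}$ defines a holomorphic coordinate on a neighbourhood $V$ of $p$ in which $\sqrt{-K}\,g=|\d w|^2$ and $Q=\d w^2$. In this coordinate the immersion is encoded by a single meromorphic Gauss map $G=G(w)$ through the Weierstrass formula with data $(G,1/G^{\prime})$,
\[
\bx(w)=\Re\int^{w}\Bigl(\tfrac12(1-G^2),\,\tfrac{\i}{2}(1+G^2),\,G\Bigr)\frac{\d w}{G^{\prime}},
\]
the normalisation $Q=\d w^2$ amounting to $fG^{\prime}\equiv 1$. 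By construction of $\seq$ the differential $P_\ell$ is the higher Schwarzian derivative of $G$ of weight $\ell$ taken in this flat coordinate; in particular $P_2=S(G;w)\,\d w^2$ with $S(G;w)=\tfrac{G^{\prime\prime\prime}}{G^{\prime}}-\tfrac32\bigl(\tfrac{G^{\prime\prime}}{G^{\prime}}\bigr)^2$, and more generally
\[
P_\ell=\Bigl(S^{(\ell-2)}+\Phi_\ell\bigl(S,S^{\prime},\dots,S^{(\ell-3)}\bigr)\Bigr)\,\d w^{\ell},
\]
where $\Phi_\ell$ is a universal weighted-homogeneous differential polynomial of weight $\ell$.

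The key structural observation is that the assignment $\bigl(S,S^{\prime},\dots,S^{(\ell-2)}\bigr)\mapsto(P_2,\dots,P_\ell)$ is weighted-triangular and hence invertible: the top derivative $S^{(\ell-2)}$ enters $P_\ell$ linearly and occurs in no $P_j$ with $j<\ell$. Consequently, if $S$ is a polynomial in $w$ of degree at most $n-3$, then $S^{(n-2)}\equiv 0$, so $P_n=\Phi_n\bigl(S,\dots,S^{(n-3)}\bigr)\,\d w^n$ becomes a weighted-homogeneous polynomial expression of degree $n$ in $P_2,\dots,P_{n-1}$; that is, the resulting immersion has degree at most $n$. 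Choosing $\deg S=n-3$ makes the top content $S^{(n-3)}$ a non-zero constant, and one checks, using the same triangularity together with the weight constraints that force several intermediate algebraic types to vanish (as in the passage from Enneper to the catenoid and helicoid), that for a generic such polynomial $P_{n-1}$ is not a polynomial expression in $P_2,\dots,P_{n-2}$, so the degree is exactly $n$.

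With this in hand the construction is immediate. By \cref{mainthma}(i) the Schwarzian $S=S(G;w)$ is holomorphic on a disc $D_r=\{|w|<r\}\subset V$, hence equals the sum of its Taylor series there. Let $S_n$ be the Taylor polynomial of $S$ at $w=0$ of degree $n-3$ (if its leading coefficient happens to vanish, replace it by $\varepsilon_n w^{\,n-3}$ with $\varepsilon_n\to 0$, which does not affect the limit); then $S_n\to S$ uniformly on every compact subdisc. Solving the linear ODE $y^{\prime\prime}+\tfrac12 S_n\,y=0$ with initial data chosen so that $y_1/y_2$ and its derivative agree with $G,G^{\prime}$ at $w=0$, we obtain meromorphic Gauss maps $G_n=y_1/y_2$ with Schwarzian $S_n$, and we let $\bx_n$ be the associated Weierstrass surface with data $(G_n,1/G_n^{\prime})$. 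By the previous paragraph $\bx_n$ has degree $n$. Continuous dependence of the solutions of a linear ODE on its coefficients gives $G_n\to G$ in $C^\infty_{\mathrm{loc}}(D_r)$; since $Q\neq 0$ near $p$ we have $G^{\prime}\neq 0$ on a fixed subdisc, whence $G_n^{\prime}\neq 0$ there for all large $n$ and each $\bx_n$ is a genuine minimal immersion. Taking $U_p$ to be the $w$-preimage of such a subdisc, the uniform convergence of $G_n$ and $G_n^{\prime}$ makes the Weierstrass integrands converge uniformly (the poles of $G$ are removable for the surface through $f=1/G^{\prime}$ and are matched for large $n$), so that $\bx_n\to\bx$ locally uniformly, as claimed.

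The heart of the matter, and the step I expect to be the main obstacle, is the weighted-triangular structure of the higher Schwarzian descendants asserted above: one must verify from the recursion defining $\seq$ that $S^{(\ell-2)}$ occurs linearly and only in $P_\ell$, and that truncating $S$ to a polynomial produces precisely a weighted-homogeneous algebraic relation among the $P_\ell$ of the correct degree $n$. Everything else is soft, namely the analyticity of $S$ from \cref{mainthma}(i), Taylor approximation, and continuous dependence of the ODE solutions on their parameters.
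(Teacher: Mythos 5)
Your construction is essentially the paper's own proof: pass to a $Q$-adapted coordinate, truncate the Schwarzian potential to its Taylor polynomial of degree $n-3$, reconstruct the approximating Gauss maps as ratios of solutions of Hill's equation with matched initial data, feed these into the Weierstrass representation, and conclude convergence from continuous dependence of the linear ODE on its coefficients. The one point to correct is precisely the step you flag as the ``heart of the matter'': it is a non-issue, and your displayed formula for $P_\ell$ is not right. In the coordinate $w$ with $Q=\d w^2$ one has $q\equiv 1$, hence $\gamma=\partial_w q/(2q)\equiv 0$ by \eqref{eqforgamma}, so by \eqref{defopcoord} the operator $\nabla^{1,0}$ reduces to $\partial_w$ and
$$
P_{\ell}=S^{(\ell-2)}\,\d w^{\ell}
$$
exactly; your correction terms $\Phi_\ell$ vanish identically, there is no triangular system to invert, and truncating $S$ makes $P_n^{\bx_n}\equiv 0$ on the nose, which is the first clause of the paper's definition of degree $n$. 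In particular your unproved genericity claim (that for generic truncations $P_{n-1}$ is not expressible in $P_2,\dots,P_{n-2}$) can simply be dropped: the paper does not establish minimality of the degree either, since ``degree $n$'' is declared whenever $P_n$ vanishes identically. One genuine omission to repair: before matching initial data at $p$ you should, as the paper does, apply an ambient rotation (harmless by \cref{mainthma}) so that $G\neq\infty$ on $U_p$; this makes $w_2$ zero-free, makes the prescription of $G(p),G^{\prime}(p)$ well posed, and, via the locally uniform convergence $w_{2,n}\to w_2$, guarantees that the $G_n$ are actually holomorphic on a fixed subdisc for large $n$ rather than merely locally injective meromorphic -- your parenthetical about poles being ``matched for large $n$'' is not an argument. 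The sign discrepancy in your normalisation ($Q=-fG^{\prime}\,\d w^2$ versus \eqref{relheighthopf}) is immaterial.
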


The Schwarzian derivative arises most naturally in the context of projective differential geometry. In \cref{appa} we discuss the relation between the definition of the Schwarzian derivative used in this note and the usual definition of projective differential geometry. 

\subsection*{Acknowledgements} This article is based on the doctoral thesis of L.P.. The authors were partially supported by the priority programme SPP 2026 ``Geometry at Infinity'' of DFG. The authors are grateful to Jacob Bernstein for several helpful discussions regarding the content of this article. The authors also express their gratitude to the anonymous referees for their  reports, which helped to enhance the clarity of this paper.

\section{Preliminaries}

\subsection{Weyl connections}

Let $(M,[g])$ be an oriented surface equipped with a conformal structure $[g]$. Recall that a Weyl connection for $[g]$ (or $[g]$-conformal connection) is a torsion-free connection $\nabla$ on $TM$ preserving $[g]$, that is, its parallel transport maps are angle preserving with respect to the conformal structure $[g]$. A torsion-free connection $\nabla$ on $TM$ preserves $[g]$ if and only if for some -- and hence any --  Riemannian metric $g \in [g]$, there exists a $1$-form $\beta \in \Omega^1(M)$, so that 
\begin{equation}\label{confconid}
\nabla g=2\beta\otimes g. 
\end{equation} 
Conversely, it follows from Koszul's identity that for every pair $(g,\beta)$ there exists a unique affine torsion-free connection satisfying~\eqref{confconid}, which is 
$$
{}^{(g,\beta)}\nabla=\gnabla+g\otimes \beta^{\#}-\beta\otimes \mathrm{Id}-\mathrm{Id}\otimes \beta, 
$$
where $\gnabla$ denotes the Levi-Civita connection of the metric $g$ and $\beta^{\#}$ denotes the $g$-dual vector field to $\beta$. Note that for every smooth function $u$ on $M$, the pair $(\e^{2u}g,\beta+\d u)$ determines the same Weyl connection as the pair $(g,\beta)$
$$
{}^{(\exp(2u)g,\beta+\d u)}\nabla={}^{(g,\beta)}\nabla, 
$$
as can easily be verified using the identity (c.f.~\cite[Theorem 1.159]{MR867684})
$$
{}^{\e^{2u}g}\nabla={}^g\nabla-g\otimes{}^g\nabla u+\d u \otimes \mathrm{Id}+\mathrm{Id}\otimes \d u.
$$
We will use the notation $\con$ to denote a Weyl connection for $[g]$ and simply write $\nabla$ when the conformal structure is clear from the context. 

Let $J$ denote the complex structure on $M$ induced by $[g]$ and the orientation. Clearly, a torsion-free connection preserves $[g]$ if and only if it preserves $J$ and we may therefore also think of the Weyl connections for $[g]$ as torsion-free $J$-linear connections, that is, torsion-free connections on $TM$ whose parallel transports maps are $J$-linear. Consequently, a Weyl connection $\nabla$ induces connections on all tensorial powers of the canonical bundle $L=T^{1,0}M^*$ of $(M,J)$. By standard abuse of notation, we will denote these connections by $\nabla$ as well, so that for all $\ell \in \mathbb{Z}$ we have first order differential operators
$$
\nabla : \Gamma(L^{\ell})\to \Omega^1(M,L^{\ell}),
$$
sending smooth sections of $L^{\ell}$ to $L^{\ell}$-valued $1$-forms on $M$. Since $L^{\ell}$ is a complex vector bundle, the $L^{\ell}$-valued $1$-forms on $M$ decompose
$$
\Omega^1(M,L^{\ell})=\Omega^{1,0}(M,L^{\ell})\oplus \Omega^{0,1}(M,L^{\ell})
$$
into $(1,\! 0)$ and $(0,\! 1)$ parts. Note that we may canonically identify $\Omega^{1,0}(M,L^{\ell})$ with $\Gamma(L^{\ell+1})$ so that the $(1,\! 0)$ part of $\nabla$ may be thought of as a differential operator
$$
\nabla^{1,0} : \Gamma(L^{\ell}) \to \Gamma(L^{\ell+1}). 
$$ 
For a smooth section $\sigma \in \Gamma(L^{\ell})$, we write $\sigma^{\prime}=\nabla^{1,0}\sigma$ as well as $\sigma^{\prime\prime}=(\sigma^{\prime})^{\prime}$ and likewise for higher order derivatives. 

For what follows it will be convenient to have local coordinate expressions for the differential operators $\nabla^{1,0}$. To this end let $z : U \to \C$ be a local holomorphic coordinate on $M$. Extending $\nabla$ complex-linearly to the complexified tangent bundle $TM\otimes \C$, it follows from the $J$-linearity of $\nabla$ that 
\begin{equation}\label{jcon1}
\nabla_{\partial_z}\partial_{\bar z}=0 
\end{equation}
and furthermore that there exists a unique complex-valued function $\gamma$ on $U$ such that
\begin{equation}\label{jcon2}
\nabla_{\partial_z}\partial_z=\gamma\,\partial_z.
\end{equation}
Conversely, a torsion-free connection satisfying~\eqref{jcon1} and~\eqref{jcon2} in every holomorphic coordinate system is $J$-linear and hence a Weyl connection. 

Suppose $\sigma : U \to L^{\ell}$ is a smooth section, we write $\sigma=s\,\d z^{\ell}$ for some smooth complex-valued function $s$ on $U$. Then we have
\begin{equation}\label{defopcoord}
\nabla^{1,0}s\, \d z^{\ell}=\left(\partial_z s-\ell\gamma s\right)\d z^{\ell+1}.
\end{equation}
If $w: V \to \C$ is another local holomorphic coordinate with $U\cap V\neq \emptyset$, then writing $\sigma=\hat{s} \d w^{\ell}$ and $\nabla_{\partial_w}\partial_w=\hat{\gamma}\,\partial_w$ for complex-valued functions $\hat{s},\hat{\gamma}$ on $V$, we have 
\begin{equation}\label{changecoord}
\hat{s}=(\partial_wz)^{\ell}s,\quad \hat{\gamma}=\left(\partial_w z\right)\gamma+\left(\partial_z w\right)\partial^2_{ww}z,
\end{equation}
on $U\cap V$. Using~\eqref{changecoord} it is easy to check that the right hand side of~\eqref{defopcoord} does not depend on the chosen coordinates. The reader less familiar with complex geometry may therefore take~\eqref{defopcoord} as the definition of the operators $\nabla^{1,0}$. Note in particular that $\nabla^{1,0}$ agrees with the usual ``del-operator'' $\partial$ on functions.

\subsection{Higher order Schwarzian derivatives}

Let $(M,[g])$ be a Riemann surface and $\nabla$ a Weyl connection for $[g]$. Let $f$ be a holomorphic function on $M$ satisfying $f^{\prime}=\partial f\neq 0$. We define the Schwarzian derivative of $f$ with respect to $\nabla$ to be the quadratic differential 
$$
\mathrm{S}^{\nabla}(f)=\frac{f^{\prime\prime\prime}}{f^{\prime}}-\frac{3}{2}\left(\frac{f^{\prime\prime}}{f^{\prime}}\right)^2.
$$
In a local holomorphic coordinate system $z : U \to \C$ with $\nabla_{\partial_z}\partial_z=\gamma\, \partial_z$ for some complex-valued function $\gamma$ on $U$, we obtain with~\eqref{defopcoord}
\begin{equation}\label{schwarzdiffcoord}
\mathrm{S}^{\nabla}(f)=\left\{f,z\right\}\d z^2+\left(\frac{1}{2}\gamma^2-\partial_z \gamma\right)\d z^2, 
\end{equation}
where
$$
\left\{f,z\right\}=\frac{\partial^3_{zzz}f}{\partial_z f}-\left(\frac{\partial^2_{zz}f}{\partial_z f}\right)^2
$$
denotes the classical Schwarzian derivative of $f$ with respect to the coordinate $z$. Taking the Levi-Civita connection of the spherical metric 
$$
g_1=\left(\frac{2}{1+|z|^2}\right)^2\d z \circ \d \bar z
$$ 
on the Riemann sphere $\hat{\C}$ gives
$$
{}^{g_1}\nabla_{\partial_z}\partial_z=-\frac{2\bar z}{1+|z|^2}\,\partial_z. 
$$
Consequently, we have $\gamma=-2\bar{z}/(1+|z|^2)$, so that
$$
\frac{1}{2}\gamma^2-\partial_z \gamma=0,
$$
and~\eqref{schwarzdiffcoord} simplifies to give the classical coordinate definition of the Schwarzian derivative. Recall that the classical definition of the Schwarzian derivative extends to the set of locally injective meromorphic functions and consequently by using~\eqref{schwarzdiffcoord}, so does our definition. Furthermore, a locally injective meromorphic function $a$ defined on some domain $\Omega\subset \hat{\C}$ satisfies
\begin{equation}\label{moebvanish}
\mathrm{S}^{{}^{g_1}\nabla}(a)=0
\end{equation}
if and only if $a$ is the restriction of a M\"obius transformation. A crucial property of the Schwarzian derivative is that it defines a cocycle. If $f$ is a locally injective meromorphic function on $M$ and $a$ a local biholomorphism on $\hat{\C}$ so that $a \circ f$ is well defined, then one may easily verify that
\begin{equation}\label{cocycle}
\mathrm{S}^{\nabla}(a\circ f)=\mathrm{S}^{\nabla}(f)+f^*\left(\mathrm{S}^{{}^{g_1}\nabla}(a)\right).
\end{equation}
As a consequence of this identity and~\eqref{moebvanish} we see that the Schwarzian derivative is invariant under post-composition by a M\"obius transformation. 

The Schwarzian derivative is the first in a sequence of differential operators enjoying invariance under post-composition by a M\"obius transformation: 
\begin{defn}
Let $(M,[g])$ be a Riemann surface equipped with a Weyl connection $\nabla$. For $\ell\geqslant 2$ we define the \textit{$\ell$-th Schwarzian derivative} to be the $(\ell+1)$-th order differential operator defined by
$$
\mathrm{S}^{\nabla}_{\ell+1}(f)=\left(\mathrm{S}^{\nabla}_{\ell}(f)\right)^{\prime}\quad \text{where}\quad \mathrm{S}^{\nabla}_2(f)=\frac{f^{\prime\prime\prime}}{f^{\prime}}-\frac{3}{2}\left(\frac{f^{\prime\prime}}{f^{\prime}}\right)^2.
$$
The $\ell$-th Schwarzian derivative maps a locally injective meromorphic function $f$ on $M$ to a smooth section of $L^{\ell}$, that is, a smooth differential on $M$ of degree $\ell$.  
\end{defn}
\begin{rmk}
By construction, the operators $\mathrm{S}^{\nabla}_{\ell}$ are invariant under post-com\-po\-si\-tion by a M\"obius transformation. However, none of the higher order Schwarzian derivatives defines a cocycle. Local coordinate definitions of higher order M\"obius invariant Schwarzian differential operators previously appeared in~\cite{MR0249622} and~\cite{MR1386108}. Notice that the higher order Schwarzian derivatives introduced in \cite{MR1386108} differ from the ones introduced here, see in particular \cite[Equation 1-19 in Theorem D]{MR1386108}. 
\end{rmk}
Every non-vanishing holomorphic quadratic differential $Q$ on a Riemann surface $(M,[g])$ gives rise to a flat $[g]$-conformal connection ${}^A\nabla$ where ${}^A\nabla$ denotes the Levi-Civita con\-nection of the flat Lorentzian metric $A=\mathrm{Re}(Q)$. Indeed, in a local holomorphic coordinate $z : U \to \C$ on $M$ so that $Q=q \d z^2$ for some holomorphic function $q$ on $U$, we have ${}^A\nabla_{\partial_z}\partial_{\bar z}=0$ and
$$
{}^A\nabla_{\partial_z}\partial_{z}=\gamma\,\partial_z,
$$
with
\begin{equation}\label{eqforgamma}
\gamma=\frac{\partial_z q}{2q},
\end{equation}
showing that ${}^A\nabla$ is $[g]$-conformal. If $f$ is a locally injective meromorphic function on $U$, we obtain with~\eqref{defopcoord}
\begin{equation}\label{schwarzwithq}
\mathrm{S}^{{}^A\nabla}_2(f)=\left(\left\{f,z\right\}+\frac{5}{8}\left(\frac{\partial_z q}{q}\right)^2-\frac{1}{2}\frac{\partial^2_{zz}q}{q}\right)\d z^2.
\end{equation} 

\section{The entropy differentials and their invariance properties}

\subsection{The entropy differentials}

Let $M$ be an oriented smooth surface and $\bx=(\bx^i) : M \to \mathbb{E}^3$ a smooth immersion into Euclidean $3$-space. Let $g$ and $A$ denote the induced metric and second fundamental form on $M$
$$
g=\d \bx \cdot \d\bx, \quad A=-\d \mathbf{N}\cdot \d\bx,
$$
where $\mathbf{N}=(\mathbf{N}^i) : M \to S^2\subset \mathbb{E}^3$ denotes the (orientation compatible) Gauss map of $\bx$. The Weingarten shape operator is the endomorphism $S : TM \to TM$ satisfying
$$
A(X,Y)=g(S(X),Y)=g(X,S(Y))
$$ 
for all $X,Y \in \Gamma(TM)$. The operator $S$ is $g$-symmetric and hence (pointwise) diagonalizable with real eigenvalues $\kappa_1,\kappa_2$, called the principal curvatures of $\bx$. Recall that $\bx$ is called \textit{minimal} if its mean curvature $H=\frac{1}{2}\tr S=\frac{1}{2}(\kappa_1+\kappa_2)$ vanishes identically. A point $p \in M$ where $\kappa_1=\kappa_2$ is an umbilic point, which -- in the minimal case -- amounts to $\kappa_1=\kappa_2=0$. Consequently, the umbilic points are precisely those points where the Gauss curvature $K=\kappa_1\kappa_2=-(\kappa_1)^2$ vanishes. The umbilic locus is the set
$$
\mathcal{U}=\left\{p \in M \,|\, \kappa_1(p)=\kappa_2(p)=0\right\}
$$
and we use $M^{\prime}$ to denote its complement in $M$, i.e., $M^{\prime}=M\setminus \mathcal{U}$. The minimality of $\bx$ is well-known to be equivalent to the meromorphicity of the stereographically projected Gauss-map. More precisely, we have the following lemma whose proof may be found in~\cite{MR552581} or most standard texts on minimal surfaces.
\begin{lem}\label{gaussholo}
Let $\bx : M \to \E^3$ be a smooth immersion with stereographically projected Gauss map $G=(\mathbf{N}^1+\i\mathbf{N}^2)/(1-\mathbf{N}^3) : M \to \hat{\C}$. Then $\bx$ is minimal if and only if $G$ is meromorphic with respect to the complex structure on $M$ induced by $g$ and the orientation. 
\end{lem}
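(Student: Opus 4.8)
The plan is to reduce the statement to the classical fact that, in conformal coordinates, minimality is equivalent to harmonicity of the immersion, and then to identify $G$ with the Weierstrass data. First I would invoke the existence of isothermal coordinates (Korn--Lichtenstein): around any point of $M$ there is a local holomorphic coordinate $z=u+\i v$ for the complex structure induced by $g$ and the orientation, in which $g=\e^{2\omega}\,\d z\circ\d\bar z$. In such a coordinate the phrase ``$G$ is meromorphic'' simply means $\partial_{\bar z}G=0$ away from the poles of $G$. Setting $\phi:=\partial_z\bx=(\phi^1,\phi^2,\phi^3)$, conformality of $z$ gives $\phi\cdot\phi=0$ and $\phi\cdot\bar\phi=\tfrac12\e^{2\omega}>0$, and the standard identity $\bx_{z\bar z}=\tfrac14(\bx_{uu}+\bx_{vv})=\tfrac12\e^{2\omega}H\,\mathbf{N}$ shows that $\bx$ is minimal if and only if $\partial_{\bar z}\phi=0$, i.e. $\phi$ is holomorphic.

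Second, I would make the link with the Gauss map explicit. Since $\phi$ is an isotropic nonzero vector, on the open set where $f:=\phi^1-\i\phi^2$ does not vanish one may write $\phi=f\,\Phi(G_0)$ with $\Phi(w)=\left(\tfrac12(1-w^2),\tfrac{\i}{2}(1+w^2),w\right)$ and $G_0:=\phi^3/f$. A direct computation of $\mathbf{N}=(\bx_u\times\bx_v)/|\bx_u\times\bx_v|$ in terms of $\phi$ (using $\phi\cdot\phi=0$) yields $\mathbf{N}^3=(|G_0|^2-1)/(|G_0|^2+1)$ and $\mathbf{N}^1+\i\mathbf{N}^2=2G_0/(1+|G_0|^2)$, whence $G=(\mathbf{N}^1+\i\mathbf{N}^2)/(1-\mathbf{N}^3)=G_0$. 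Thus the stereographically projected Gauss map coincides with the Weierstrass function $G_0$, and this identity holds for \emph{every} conformal immersion, minimal or not. The locus $f=0$ and the poles of $G$ are isolated and can be dealt with either by a rotation of $\E^3$ (changing the stereographic chart) or by a removable-singularity argument, so it suffices to argue on the dense open set where $f\neq0$ and $G$ is finite.

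With $G=G_0$ in hand the forward implication is immediate: if $\bx$ is minimal then $\phi$ is holomorphic, so $G=\phi^3/(\phi^1-\i\phi^2)$ is a quotient of holomorphic functions and hence meromorphic. The converse is the crux. Assume $G$ is meromorphic, so $\partial_{\bar z}G_0=0$ where $G_0$ is finite. Differentiating $\phi=f\,\Phi(G_0)$ and using $\partial_{\bar z}G_0=0$ gives $\partial_{\bar z}\phi=(\partial_{\bar z}f)\,\Phi(G_0)=(\partial_{\bar z}f/f)\,\phi$, so $\partial_{\bar z}\phi$ is a scalar multiple of $\phi$. On the other hand $\bx$ is $\R^3$-valued, so $\bx_{z\bar z}=\partial_{\bar z}\phi$ is a \emph{real} vector. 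Since $\phi$ is isotropic and nonzero, a nonzero complex multiple $\lambda\phi$ can never be real: indeed $\lambda\phi\in\R^3\setminus\{0\}$ would force $0=\lambda^2(\phi\cdot\phi)=(\lambda\phi)\cdot(\lambda\phi)=|\lambda\phi|^2>0$. Hence $\partial_{\bar z}f/f=0$, so $\partial_{\bar z}\phi=0$, $\phi$ is holomorphic, and $\bx$ is minimal.

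I expect the main obstacle to be precisely this converse direction: holomorphicity of $G$ only controls the Weierstrass factor $G_0$, not the factor $f$, so one must exploit an extra piece of information --- the reality of $\bx_{z\bar z}$ together with the isotropy $\phi\cdot\phi=0$ --- to force $f$ to be holomorphic as well. A secondary, more technical point is the bookkeeping at the isolated points where $f$ vanishes or $G=\infty$; there the conclusion $\partial_{\bar z}\phi=0$ extends from the dense open set by continuity, since it is a closed condition.
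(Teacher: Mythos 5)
Your proof is correct, but note that it cannot be compared line-by-line with ``the paper's own proof'': the paper gives no proof of this lemma, deferring instead to Kenmotsu~\cite{MR552581} and standard texts. The standard reference argument proves a sharper, first-order identity valid for an arbitrary conformal immersion, expressing $\partial_{\bar z}G$ as an explicit multiple of the mean curvature $H$ (this is the content of Kenmotsu's prescribed-mean-curvature formula), from which the equivalence $H\equiv 0 \Leftrightarrow \bar\partial G=0$ is immediate. Your route is the classical Weierstrass-representation argument instead: isothermal coordinates give $\phi=\bx_z$ with $\phi\cdot\phi=0$ and $\bx_{z\bar z}=\tfrac12\e^{2\omega}H\,\mathbf{N}$; the factorization $\phi=f\,\Phi(G_0)$ through the isotropic curve identifies $G=G_0$ for \emph{any} conformal immersion (your formulas for $\mathbf{N}$ and the stereographic identification check out, including the orientation sign); the forward direction is then a quotient of holomorphic functions, and your converse correctly exploits the two pieces of extra information --- reality of $\bx_{z\bar z}$ and isotropy of $\phi$ --- to force $\partial_{\bar z}f=0$, since a nonzero real vector cannot be null. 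What each approach buys: Kenmotsu's identity is quantitative and generalizes to prescribed $H$, while your argument is elementary, self-contained, and isolates exactly where minimality enters.

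One point deserves more care than your closing remark suggests: for a general smooth immersion the zero set of the smooth function $f=\phi^1-\i\phi^2$ need \emph{not} be isolated, so ``the locus $f=0$ is isolated'' is not free. It is, however, justified under either hypothesis in force: in the forward direction $f$ is holomorphic (or $f\equiv 0$, in which case $\mathbf{N}$ is constant and the surface is planar, hence minimal with constant $G$); in the converse direction one checks, as your cross-product computation shows, that $\{f=0\}=G^{-1}(\infty)$ is precisely the polar set of the function $G$ that you have \emph{assumed} meromorphic, hence isolated unless $G\equiv\infty$ (again the planar case). Alternatively your rotation trick settles the matter locally, since rotations of $\E^3$ act on $G$ by M\"obius transformations and preserve both minimality and meromorphy. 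With that justification spelled out, the continuity extension of $H\equiv 0$ across the isolated bad set is valid and the proof is complete.
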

Another key property of minimal surfaces is that the induced metric satisfies the so-called Ricci condition. 
\begin{thm}
The induced metric $g$ of a minimal immersion $\bx : M \to \E^3$ has non-positive Gauss curvature $K$ and whenever $K<0$, the metric $g$ satisfies 
\begin{equation}\label{riccicond}
\Delta \log(-K)=4K. 
\end{equation}
Conversely, if $(M,g)$ is a simply connected Riemannian $2$-manifold of strictly negative Gauss curvature satisfying~\eqref{riccicond}, then $(M,g)$ can be immersed isometrically and minimally into $\E^3$.  
\end{thm}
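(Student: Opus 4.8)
The plan is to prove the two implications separately, using isothermal coordinates and the Hopf differential as the organising tool. For the forward direction, non-positivity of $K$ is immediate: minimality forces $\kappa_2=-\kappa_1$, whence $K=\kappa_1\kappa_2=-(\kappa_1)^2\leqslant 0$, with equality precisely at umbilic points. To obtain the Ricci condition where $K<0$, I would fix a local isothermal coordinate $z=u+\i v$ for the conformal structure induced by $g$, writing $g=\lambda^2(\d u^2+\d v^2)$ for a positive function $\lambda$. In such a coordinate $K=-\lambda^{-2}\Delta_0\log\lambda$, with $\Delta_0=\partial_u^2+\partial_v^2=4\partial_z\partial_{\bar z}$ the flat Laplacian, and the metric Laplacian is $\Delta=\lambda^{-2}\Delta_0$.

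The essential input is the Hopf differential $\Phi=\langle\bx_{zz},\mathbf{N}\rangle\,\d z^2=:\phi\,\d z^2$, which by minimality is holomorphic (equivalently, $G$ is meromorphic by \cref{gaussholo}). Away from umbilic points $\phi$ is nowhere vanishing and one has the standard relation $-K=c\,|\phi|^2\lambda^{-4}$ for a positive constant $c$ fixed by conventions, so that $\log(-K)=2\log|\phi|-4\log\lambda+\log c$. Since $\phi$ is holomorphic and non-vanishing, $\log|\phi|$ is harmonic, so $\Delta_0\log|\phi|=0$, while $\Delta_0\log\lambda=-K\lambda^2$ by the curvature formula. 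Combining these,
$$
\Delta\log(-K)=\lambda^{-2}\bigl(2\Delta_0\log|\phi|-4\Delta_0\log\lambda\bigr)=\lambda^{-2}\bigl(0+4K\lambda^2\bigr)=4K,
$$
which is \eqref{riccicond}.

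For the converse I would run this computation backwards. Given $(M,g)$ simply connected with $K<0$ and satisfying \eqref{riccicond}, fix an isothermal coordinate with $g=\lambda^2(\d u^2+\d v^2)$ and seek a holomorphic function $\phi$ with $|\phi|^2=c^{-1}(-K)\lambda^4$, equivalently $\log|\phi|=\tfrac12\log(-K)+2\log\lambda-\tfrac12\log c$. The Ricci condition is exactly what renders the right-hand side harmonic: using $\Delta_0\log(-K)=\lambda^2\Delta\log(-K)=4K\lambda^2$ together with $\Delta_0\log\lambda=-K\lambda^2$, one finds $\Delta_0\bigl(\tfrac12\log(-K)+2\log\lambda\bigr)=2K\lambda^2-2K\lambda^2=0$. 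Hence this function is the real part of a holomorphic function; since $M$ is simply connected a global harmonic conjugate exists, producing a globally defined, nowhere-vanishing holomorphic $\phi$, unique up to a unimodular constant. I would then take the trace-free symmetric tensor $A$ whose Hopf differential is $\Phi=\phi\,\d z^2$; the associated shape operator $S$ is trace-free with $\det_g S=-c\,|\phi|^2\lambda^{-4}=K$, so the Gauss equation holds, and the Codazzi equations reduce precisely to the holomorphicity of $\Phi$, valid by construction. The fundamental theorem of surface theory (Bonnet) then yields a minimal isometric immersion $\bx:M\to\E^3$, and simple connectivity lets the local solutions be assembled into a globally defined immersion, unique up to Euclidean motion.

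The main obstacle lies in the converse, in upgrading local existence to global existence. Simple connectivity enters twice: once to extract a single-valued holomorphic $\phi$ from the harmonic potential via a global harmonic conjugate, and once to integrate the Gauss--Codazzi system into a globally defined immersion. The strict inequality $K<0$ is indispensable, as it removes umbilic points and keeps $\phi$ nowhere vanishing, so that $\log|\phi|$ is genuinely harmonic and the relation $-K=c\,|\phi|^2\lambda^{-4}$ never degenerates; at an umbilic point $\log(-K)$ would fail to be smooth and the argument would require a separate local analysis.
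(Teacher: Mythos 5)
The paper does not actually prove this theorem: it is the classical Ricci condition, and the remark immediately following the statement refers the reader to Chern--Osserman~\cite{MR655419}. So there is no internal argument to compare against; judged on its own, your proof is correct and follows the standard classical route. The forward direction is complete: $K=-(\kappa_1)^2\leqslant 0$ from minimality, the Gauss equation $-K=4|\phi|^2\lambda^{-4}$ with $H=0$, holomorphicity of the Hopf differential, and harmonicity of $\log|\phi|$ where $\phi\neq 0$ give \eqref{riccicond} by a two-line computation. (One small inaccuracy: holomorphicity of the Hopf differential is equivalent to \emph{constant} mean curvature, not to meromorphicity of $G$, but since minimality is assumed this is harmless.) In the converse, the one step you should shore up is the opening move ``fix an isothermal coordinate'' with $g=\lambda^2(\d u^2+\d v^2)$ on all of $M$: isothermal coordinates are a priori only local, and $\lambda$, hence your harmonic potential $\tfrac{1}{2}\log(-K)+2\log\lambda$, is chart-dependent. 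Either invoke uniformization --- a simply connected surface with $K<0$ cannot be the sphere by Gauss--Bonnet, so $M$ is conformally the disk or the plane and a single global chart exists --- or patch local data: local solutions $\phi_\alpha$ with $|\phi_\alpha|^2=\tfrac{1}{4}(-K)\lambda_\alpha^4$ differ on overlaps by holomorphic unimodular, hence constant, factors, and since every $U(1)$-cocycle on a simply connected $M$ trivializes, the local quadratic differentials $\phi_\alpha\,\d z_\alpha^2$ can be adjusted to a global holomorphic $\Phi$. With that sentence added, the remainder (Gauss from $\det_g S=-4|\phi|^2\lambda^{-4}=K$, Codazzi reducing to $\partial_{\bar z}\phi=0$ when $H\equiv 0$, then Bonnet on the simply connected $M$) is exactly the classical argument; it is also consonant with how the paper itself uses \eqref{riccicond}, namely that it makes $\sqrt{-K}\,g$ flat (\cref{flatmetric}) and reappears as Liouville's equation \eqref{liouville} in $Q$-adapted coordinates in the proof of \cref{levihopf=leviflat}.
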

\begin{rmk}
Here $\Delta$ denotes the Laplace-Beltrami operator with respect to $g$. For a proof the reader may consult~\cite{MR655419}.
\end{rmk}
An immediate consequence of~\eqref{riccicond} is the following result. 
\begin{cor}\label{flatmetric}
Let $g$ denote the induced metric of a minimal immersion $\bx : M \to \E^3$ and $K$ its Gauss curvature. Then on $M^{\prime}\subset M$, the complement of the umbilic locus, $g_0=\sqrt{-K}g$ defines a flat metric. 
\end{cor}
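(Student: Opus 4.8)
The plan is to obtain the flatness of $g_0=\sqrt{-K}\,g$ as a direct consequence of the Ricci condition~\eqref{riccicond} together with the conformal transformation law for the Gauss curvature of a surface. Since $K<0$ on $M^{\prime}$, the function $\sqrt{-K}$ is positive and smooth there, so $g_0$ is a genuine Riemannian metric conformal to $g$, and it suffices to compute its Gauss curvature and verify that it vanishes.

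First I would write $g_0=\e^{2\phi}g$ with
$$
\phi=\tfrac14\log(-K),
$$
which is well defined and smooth on $M^{\prime}$ because $-K>0$ there. Next I would invoke the standard formula describing how the Gauss curvature transforms under a conformal rescaling of a surface metric, namely
$$
K_{g_0}=\e^{-2\phi}\left(K-\Delta\phi\right),
$$
where $\Delta$ denotes the Laplace--Beltrami operator of $g$ and $K$ is the Gauss curvature of $g$.

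Finally I would substitute $\phi=\tfrac14\log(-K)$ and apply the Ricci condition $\Delta\log(-K)=4K$ to compute
$$
\Delta\phi=\tfrac14\,\Delta\log(-K)=\tfrac14\cdot 4K=K,
$$
whence $K_{g_0}=\e^{-2\phi}(K-K)=0$. This establishes that $g_0$ is flat on $M^{\prime}$. The calculation is entirely routine and presents essentially no obstacle; the only point demanding mild care is the bookkeeping of the sign and normalization conventions for $\Delta$ appearing in the conformal transformation law, which must be kept consistent with the convention under which~\eqref{riccicond} is stated. One checks that the result is in fact robust under either choice, since a change of sign convention flips the sign in both the transformation law and the Ricci condition simultaneously, leaving the conclusion $K_{g_0}=0$ unchanged.
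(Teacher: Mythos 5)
Your proposal is correct and coincides with the paper's own proof: both write $g_0=\e^{2u}g$ with $u=\frac14\log(-K)$, apply the standard conformal transformation law $K_{\e^{2u}g}=\e^{-2u}(K_g-\Delta u)$, and conclude $K_{g_0}=0$ from the Ricci condition~\eqref{riccicond}. Your closing remark on the robustness of the sign conventions is accurate but not needed.
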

\begin{proof}The proof is an immediate consequence of the well-known and easily-derived formula for the dependence of the Gauss curvature on a conformal change of the metric. For $u \in C^{\infty}(M)$, we have
$$
K_{e^{2u}g}=\e^{-2u}\left(K_g-\Delta u\right).
$$
Consequently, on $M^{\prime}$ we obtain
$$
K_{g_0}=\frac{1}{\sqrt{-K}}\left(K-\frac{1}{2}\Delta\log\left(\sqrt{-K}\right)\right)=\frac{1}{\sqrt{-K}}\left(K-\frac{1}{4}\Delta\log\left(-K\right)\right)=0.
$$
\end{proof}
Clearly, the Levi-Civita connection ${}^{g_0}\nabla$ of $g_0$ is a $[g]$-conformal connection. Combining \cref{gaussholo} and \cref{flatmetric} we immediately see that we obtain a sequence of differentials $P_{\ell}$ on the complement $M^{\prime}$ of the umbilic locus of a minimal immersion $\bx : M \to \E^3$. Indeed, since the Gauss map is a locally injective meromorphic function on $M^{\prime}$, the differentials 
$$
P_{\ell}=\mathrm{S}^{{}^{g_0}\nabla}_{\ell}(G) 
$$
are well defined on $M^{\prime}$. 


On $M^{\prime}$ we have another flat metric given by the second fundamental form. 
\begin{lem}\label{levihopf=leviflat}
On $M^{\prime}$ the Levi-Civita connection ${}^{g_0}\nabla$ of the flat Riemannian metric $g_0$ and the Levi-Civita connection ${}^A\nabla$ of the flat Lorentzian metric $A$ agree.  
\end{lem}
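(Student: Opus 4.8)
The plan is to work in a local holomorphic coordinate $z:U\to\C$ on $M'$ and to exploit the fact, recorded after~\eqref{jcon2}, that a Weyl connection for $[g]$ is completely determined in such a coordinate by the single complex-valued function $\gamma$ appearing in $\nabla_{\partial_z}\partial_z=\gamma\,\partial_z$, together with the relation $\nabla_{\partial_z}\partial_{\bar z}=0$. Both connections in question are Levi-Civita connections of $[g]$-conformal metrics: for $g_0=\sqrt{-K}\,g$ this is immediate since $g_0\in[g]$, and for $A$ it is the content of the discussion preceding~\eqref{eqforgamma}. Hence both are Weyl connections for $[g]$, and it suffices to check that they produce the same $\gamma$ in the coordinate $z$.

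First I would identify the second fundamental form with the real part of the Hopf differential. Writing $g=\e^{2\lambda}\,\d z\circ\d\bar z$ for a real function $\lambda$, I decompose the symmetric tensor $A$ into its trace-free and pure-trace parts as $A=\Re(q\,\d z^2)+\rho\,g$ for complex-valued $q$ and real $\rho$ on $U$. Since $\tr_g\!\big(\Re(q\,\d z^2)\big)=0$, the mean curvature is $H=\rho$, so minimality forces $\rho=0$ and $A=\Re(q\,\d z^2)$; the Codazzi equations then show that $Q=q\,\d z^2$ is a holomorphic quadratic differential. The Hopf differential vanishes exactly at umbilic points, so $Q$ is non-vanishing on $M'$, and the computation leading to~\eqref{eqforgamma} identifies $A=\Re(Q)$ as a flat Lorentzian metric whose Levi-Civita connection satisfies
$$
{}^A\nabla_{\partial_z}\partial_z=\frac{\partial_z q}{2q}\,\partial_z.
$$

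It then remains to compute the same quantity for $g_0$. From the Gauss equation $K=\det(g^{-1}A)=\det(A)/\det(g)$, evaluated in the real coordinates $z=x+\i y$ where $A$ has determinant $-|q|^2$ and $g$ has determinant $\e^{4\lambda}$, one gets $-K=|q|^2\e^{-4\lambda}$ and therefore
$$
g_0=\sqrt{-K}\,g=|q|\,\d z\circ\d\bar z=\e^{2\varphi}\,\d z\circ\d\bar z,\qquad \varphi=\tfrac12\log|q|.
$$
Recall that the Levi-Civita connection $\nabla$ of any conformal metric $\e^{2\varphi}\,\d z\circ\d\bar z$ satisfies $\nabla_{\partial_z}\partial_z=2(\partial_z\varphi)\,\partial_z$ — this is exactly the computation carried out for the spherical metric $g_1$ in the text. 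Since $q$ is holomorphic we have $\partial_z\log\bar q=0$, so $\partial_z\varphi=\tfrac14\,\partial_z q/q$ and hence
$$
{}^{g_0}\nabla_{\partial_z}\partial_z=2(\partial_z\varphi)\,\partial_z=\frac{\partial_z q}{2q}\,\partial_z.
$$
This matches the expression for ${}^A\nabla$, and since both connections additionally satisfy $\nabla_{\partial_z}\partial_{\bar z}=0$ by~\eqref{jcon1}, the uniqueness recorded after~\eqref{jcon2} yields ${}^{g_0}\nabla={}^A\nabla$ on $M'$.

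I do not anticipate a genuine obstacle here: conceptually the lemma simply expresses that $g_0=|Q|$ and $A=\Re(Q)$ are two manifestations of the flat metric determined by the holomorphic quadratic differential $Q$, and the defining function $\gamma$ detects only the holomorphic factor $\partial_z\log q$, which both metrics reproduce identically. The only places requiring care are conventions — the factors in $g=\e^{2\lambda}\,\d z\circ\d\bar z$ and in $\gamma=2\partial_z\varphi$, together with the Lorentzian signature of $A$ — and the verification that the zero locus of $Q$ coincides with the umbilic locus $\mathcal{U}$, which is what guarantees that $Q$ is non-vanishing and that the whole construction is valid on all of $M'$.
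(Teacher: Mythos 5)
Your proof is correct, but it takes a genuinely different route from the paper's. The paper invokes the local normal form of the non-vanishing holomorphic quadratic differential: near each point of $M^{\prime}$ it chooses a $Q$-adapted coordinate with $Q=\d z^2$, in which $A=\d x^2-\d y^2$ and, via Liouville's equation, $K=-\e^{-4u}$, so $g_0=\sqrt{-K}g=\d x^2+\d y^2$; both metrics are then in standard flat form, all Christoffel symbols vanish identically, and the two Levi-Civita connections coincide trivially. You instead work in an arbitrary holomorphic coordinate with $Q=q\,\d z^2$, derive $-K=|q|^2\e^{-4\lambda}$ from the Gauss equation to get the identity $g_0=|q|\,\d z\circ\d\bar z$, and then match the single Christoffel function: $\gamma=\partial_z q/(2q)$ for ${}^A\nabla$ by~\eqref{eqforgamma}, and $\gamma=2\partial_z\bigl(\tfrac12\log|q|\bigr)=\partial_z q/(2q)$ for ${}^{g_0}\nabla$, the anti-holomorphic contribution $\partial_z\log\bar q$ vanishing by holomorphicity of $q$; the uniqueness statement after~\eqref{jcon2} then closes the argument. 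All your intermediate computations check out (including $\det A=-|q|^2$, the conformal-factor formula $\gamma=2\partial_z\varphi$ consistent with the paper's spherical-metric computation, and the identification of the zero locus of $Q$ with $\mathcal{U}$, which your curvature formula gives for free). What each approach buys: the paper's is shorter once one grants the existence of $Q$-adapted coordinates (which requires a local square root and primitive of $\sqrt{q}$), whereas yours avoids that normal form, independently reconfirms \cref{flatmetric} (since $\log|q|$ is harmonic), and makes the mechanism transparent --- the Lorentzian metric $\Re(Q)$ and the Riemannian metric $|Q|$ share a Levi-Civita connection because both detect only the holomorphic factor $\partial_z\log q$. One small wording point: $A$, being Lorentzian, is not itself a ``$[g]$-conformal metric''; what you actually use, correctly, is that ${}^A\nabla$ is a Weyl connection for $[g]$, which is exactly what the discussion surrounding~\eqref{eqforgamma} establishes.
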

\begin{proof}
Denote by $J$ the complex structure on $M$ induced by $g$ and the orientation. Then the Hopf differential $Q=A+\i AJ$ is a holomorphic quadratic differential~\cite{MR1013786}, where we write
$$
AJ(X,Y)=A(JX,Y),
$$
for $X,Y \in \Gamma(TM)$. In a neighbourhood of every point $p \in M^{\prime}$ we may find local holomorphic coordinates $z=x+\i y$ such that $Q=\d z^2$. We will henceforth call such coordinates $Q$-adapted. In such a coordinate system we have
$$
g=\e^{2u}\,\d z \circ {\d \bar z}=\e^{2u}\left(\d x^2+\d y^2\right), \quad \text{and}\quad A=\mathrm{Re}(\d z^2)=\d x^2-\d y^2. 
$$
for some real-valued function $u$ satisfying Liouville's equation 
\begin{equation}\label{liouville}
4\partial^{2}_{z\bar z} u=\e^{-2u}.
\end{equation}
It follows that $g$ has Gauss curvature $K=-\e^{-4u}$ and hence 
$$
g_0=\sqrt{-K}g=\d x^2+\d y^2. 
$$
Therefore, with respect to the coordinate $z$, the Christoffel symbols of both ${}^{g_0}\nabla$ and ${}^A\nabla$ vanish identically. Covering $M^{\prime}$ with local holomorphic $Q$-adapted coordinates implies that ${}^{g_0}\nabla={}^A\nabla$. 
\end{proof}
The differentials $P_{\ell}$ have the property of being holomorphic away from umbilic points and extending meromorphically across umbilic points. 
\begin{ppn}\label{mainthmp1}
Let $\bx : M\to \E^3$ be a non-flat minimal immersion with induced differentials $\seq$ on the complement $M^{\prime}$ of the umbilic locus. Then the differentials $\seq$ are holomorphic on $M^{\prime}$ and extend meromorphically to all of M with $P_{\ell}$ having a pole of order $\ell$ at the umbilic points.  Furthermore, the residue of $P_{\ell}$ at an umbilic point $p \in M$ is
$$
\mathrm{Res}_p(P_{\ell})=\left(-\frac{1}{2}\right)^{\ell+1}(\ell-1)!\,(n+2)^{\ell-2}(3n^2+4n),
$$
where $n$ denotes the order of vanishing of the Hopf differential at $p$.   
\end{ppn}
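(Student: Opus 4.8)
The plan is to work in local holomorphic $Q$-adapted coordinates $z$ near an umbilic point $p$, where the Hopf differential becomes $Q=\d z^2$ and, by \cref{levihopf=leviflat}, the relevant Weyl connection ${}^{g_0}\nabla={}^A\nabla$ has vanishing Christoffel symbol $\gamma=0$. This reduces the operators $\nabla^{1,0}$ to the plain derivative $\partial_z$ via~\eqref{defopcoord}, so that $P_2=\mathrm{S}^{{}^{g_0}\nabla}_2(G)=\{G,z\}\,\d z^2$ is the \emph{classical} Schwarzian of the Gauss map, and each higher differential is obtained by applying $\partial_z$ to the coefficient function: writing $P_\ell=p_\ell\,\d z^\ell$, we have $p_{\ell+1}=\partial_z p_\ell$. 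The first task is therefore to understand the leading behaviour of $p_2=\{G,z\}$ at $p$.

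Next I would exploit the fact that the $Q$-adapted coordinate $z$ is not itself holomorphic for the complex structure $J$; rather it is tied to the Hopf differential. If the Hopf differential has a zero of order $n$ at $p$ in a \emph{$J$-holomorphic} coordinate $\zeta$ (so $Q=c\,\zeta^n\,\d\zeta^2+\cdots$ with $c\neq 0$), then the $Q$-adapted coordinate satisfies $\d z=\sqrt{Q}$, giving $z\sim \zeta^{(n+2)/2}$ up to constants. The crucial computation is to transfer the Schwarzian of $G$ into the $\zeta$-coordinate using the cocycle identity~\eqref{cocycle} together with formula~\eqref{schwarzwithq}, which expresses $\mathrm{S}^{{}^A\nabla}_2(G)$ in a $J$-holomorphic coordinate purely in terms of $\{G,\zeta\}$ and the function $q$ with $Q=q\,\d\zeta^2$. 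Since $G$ is meromorphic and locally injective in $\zeta$ (the umbilic points of a minimal surface being exactly the zeros of $Q$, where $G$ has a branch point of order $n+1$), the term $\{G,\zeta\}$ contributes a predictable $\zeta^{-2}$ singularity while the $q$-dependent terms in~\eqref{schwarzwithq} contribute the dominant singular part governed by $n$. Extracting the coefficient of $\zeta^{-2}$ in $p_2$, I expect to find that $P_2$ has a double pole whose residue carries the factor $(3n^2+4n)$, matching the $\ell=2$ case of the claimed formula (where the residue reduces to $-\tfrac{1}{4}(3n^2+4n)$).

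Having pinned down $P_2=p_2\,\d z^2$ with $p_2=\alpha_n\,z^{-2}+(\text{holomorphic})$ in the $z$-coordinate, where $\alpha_n$ is the leading coefficient determined above, the induction is straightforward: since $p_{\ell+1}=\partial_z p_\ell$, differentiating $z^{-\ell}$ repeatedly shows that $p_\ell$ acquires a pole of order $\ell$ with leading coefficient $(-1)^{\ell-2}\,(\ell-1)!/1!\cdot\alpha_n\cdot z^{-\ell}$ (tracking the falling-factorial constants produced by $\partial_z^{\,\ell-2}z^{-2}$). To read off the \emph{residue} I must convert the leading coefficient in the $z$-coordinate back to a genuine residue of the $\ell$-differential, which transforms as a section of $L^\ell$; the residue of $P_\ell$ is intrinsically $\operatorname{Res}_p(p_\ell\,\d z^{\,\ell-1}\,\d z/\d z^{\,\ell-1})$, and because $z\sim\zeta^{(n+2)/2}$ the passage from the $z$-pole to the $\zeta$-residue introduces exactly the factor $(n+2)^{\ell-2}$ seen in the statement. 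Collecting the $(-\tfrac12)^{\ell+1}$, $(\ell-1)!$, $(n+2)^{\ell-2}$ and $(3n^2+4n)$ contributions then yields the asserted formula.

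The main obstacle I anticipate is the bookkeeping in the coordinate change between the $J$-holomorphic coordinate $\zeta$ (in which meromorphicity of $G$ and the order $n$ of the zero of $Q$ are manifest) and the flat $Q$-adapted coordinate $z$ (in which the Schwarzian derivatives trivialise to $\partial_z$). Getting the residue — as opposed to merely the pole order — correct requires carefully tracking how the $(\ell-1)$-fold derivative interacts with the fractional-power relation $z\sim\zeta^{(n+2)/2}$, and in particular confirming that all the \emph{subleading} terms in $p_2$ do not contaminate the residue of $p_\ell$ after $\ell-2$ differentiations. I expect the cleanest route is to compute everything to leading order in the $\zeta$-coordinate using~\eqref{schwarzwithq} and the cocycle~\eqref{cocycle}, establish the $\ell=2$ residue directly, and then run the $\partial_z$-induction while keeping only the residue-contributing terms, since a genuine residue is invariant under holomorphic coordinate changes and so may be evaluated in whichever coordinate is most convenient at each stage.
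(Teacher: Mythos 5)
Your plan is sound and reaches the correct answer, but it executes the key induction along a genuinely different route from the paper. Both arguments share the same skeleton: holomorphicity on $M^{\prime}$ via $Q$-adapted coordinates, where $P_2=\{G,w\}\,\d w^2$ and $p_{\ell+1}=\partial_w p_{\ell}$, and the $\ell=2$ residue via \eqref{schwarzwithq} applied to the normal forms $G=1+a\zeta^{n+1}+b\zeta^{n+2}+\cdots$ and $Q=q\,\d\zeta^2$ with $q$ vanishing to order $n$ at the umbilic point. For the propagation to higher $\ell$, however, the paper never leaves the $p$-centred holomorphic coordinate $\zeta$: it iterates $\nabla^{1,0}$ via \eqref{defopcoord} with $\gamma=\partial_\zeta q/(2q)\sim n/(2\zeta)$ from \eqref{eqforgamma}, so each step multiplies the leading Laurent coefficient by $-\ell(n+2)/2$, producing $\left(-\tfrac12\right)^{\ell+1}(\ell-1)!\,(n+2)^{\ell-2}(3n^2+4n)$ directly, while an induction tracks the tail $\sum_k c_{\ell,k}\,\zeta^{k-\ell}(b/a)^k$ to confirm that nothing else reaches the $\zeta^{-\ell}$ coefficient. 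You instead differentiate in the flat coordinate, where the recursion is the bare $\partial_z$, and pay with the branched relation $z\sim\zeta^{(n+2)/2}$; this does work, and the contamination issue you flag resolves cleanly: every subleading term of $p_2$ is a Puiseux monomial $z^{-2+2k/(n+2)}$, $k\geqslant 1$, which after $\ell-2$ differentiations converts back to a multiple of $\zeta^{k-\ell}\d\zeta^{\ell}$ and hence never touches the residue. Concretely, $p_{\ell}\sim\frac{4}{(n+2)^2}(-1)^{\ell}(\ell-1)!\,\mathrm{Res}_p(P_2)\,z^{-\ell}$ and the change back to $\zeta$ contributes the factor $\left(\tfrac{n+2}{2}\right)^{\ell}$, so the net conversion factor is $(n+2)^{\ell-2}/2^{\ell-2}$ (the factor $(n+2)^{\ell-2}$ arrives only together with powers of $2$, not alone as you state). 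Three corrections: first, a $Q$-adapted coordinate \emph{is} $J$-holomorphic (the paper defines it as a holomorphic coordinate with $Q=\d z^2$); its defect is only that it does not extend across the umbilic point, where $z(\zeta)$ is a branched fractional-power map — and for the same reason your closing appeal to invariance of the residue under ``holomorphic coordinate changes'' does not literally apply to $z$ at $p$, so the explicit term-by-term conversion you describe is what is actually needed (note also that $G$ is \emph{not} locally injective at $p$ itself, being an $(n+1)$-fold branched cover there, though this is harmless since you work on $M^{\prime}$ and take Laurent coefficients). Second, your $\ell=2$ sanity value should be $-\tfrac18(3n^2+4n)$, not $-\tfrac14(3n^2+4n)$. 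Third, at the umbilic point $\{G,\zeta\}$ contributes $-\tfrac{n^2+2n}{2}\zeta^{-2}$ while the $q$-dependent terms of \eqref{schwarzwithq} contribute $+\tfrac{n^2+4n}{8}\zeta^{-2}$, so the $q$-terms are not ``the dominant singular part''; only the sum $-\tfrac{3n^2+4n}{8}$ matters.
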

\begin{rmk}
Suppose that $P$ is a meromorphic differential of degree $\ell \in \mathbb{N}$ having a pole of order $\ell$ at some point $p$, so that there exists a local $p$-centred holomorphic coordinate $z$ satisfying
$$
P(z)=\frac{f(z)}{z^{\ell}}\d z^{\ell},
$$
where $f$ is a holomorphic function near $0$ with $f(0)\neq 0$. Then the~\textit{residue} of $P$ at $p$ is defined as
$$
\mathrm{Res}_{p}(P)=f(0), 
$$
which is clearly well-defined, that is, independent of the choice of local $p$-centred holo\-mor\-phic coordinate. 
\end{rmk}
\begin{proof}[Proof of \cref{mainthmp1}]
Let $p \in M^{\prime}$ and let $w : U_p \to \C$ be a local holomorphic coordinate defined in a neighbourhood of $p$ so that $Q=\d w^2$. Therefore, in such a coordinate system~\eqref{schwarzdiffcoord} becomes
\begin{equation}\label{someq}
P_2=\left\{G,w\right\} \d w^2
\end{equation}
and 
\begin{equation}\label{someid15}
P_{\ell+1}=\partial_wp_{\ell}\d w^{\ell+1}
\end{equation} 
where we write $P_{\ell}=p_{\ell}\d w^{\ell}$. Since the Schwarzian derivative maps meromorphic locally injective functions to holomorphic quadratic differentials, it follows that $P_2$ and hence the differentials $P_{\ell}$ on $M^{\prime}$ are holomorphic. 

Now suppose $z$ is any local holomorphic coordinate defined in some $p$-neighbour\-hood $V_p$ so that $Q=q\d z^2$ for some holomorphic function $q$ on $V_p$. Note that the umbilic points are isolated since they are precisely the points where the holomorphic quadratic differential $Q$ vanishes. Hence, at an umbilic point $p \in M$ we may choose a local $p$-centred holomorphic coordinate $z$ so that $\bx^3=\mathrm{Re}(z)$ and  
$$
G=1+az^{n+1}+bz^{n+2}+O(z^{n+2}),
$$
where $a \in \C^*$ and $b \in \C$. From this we compute that
$$
Q=-a(n+1)z^n\d z^2+O(z^n),
$$
so that~\eqref{schwarzwithq} yields
\begin{equation}\label{polecoord}
P_2=-\left(\left(\frac{3n^2+4n}{8}\right)z^{-2}+\frac{n(n+2)}{(n+1)}\frac{b}{a}z^{-1}\right)\d z^2+O(1).
\end{equation}
Hence we may write 
$$
P_2=\frac{f_{2,n}(z)}{z^2}\d z^2,
$$
where the complex-valued function $f_{2,n}$ is holomorphic near $0$ and satisfies
$$
f_{2,n}(0)=-\frac{3n^2+4n}{8}.
$$
Using~\eqref{someid15} and~\eqref{polecoord} together with a straightforward inductive argument we see that  
$$
P_{\ell}=\left(\sum_{k=0}^{\ell-1}c_{\ell,k}\,z^{k-\ell}\left(\frac{b}{a}\right)^{k}\right)\d z^{\ell}+O(1)
$$
for some coefficients $c_{\ell,k}$ where
$$
c_{\ell,0}=\left(-\frac{1}{2}\right)^{\ell+1}(\ell-1)!\,n(n+2)^{\ell-2}(3n+4). 
$$
It follows that we may write
$$
P_{\ell}=\frac{f_{\ell,n}(z)}{z^{\ell}}\d z^{\ell}
$$
where the complex-valued function $f_{\ell,n}$ is holomorphic near $0$ and satisfies
$$
\mathrm{Res}_p(P_{\ell})=f_{\ell,n}(0)=c_{\ell,0}, 
$$
thus completing the proof.
\end{proof}
\begin{rmk}
In~\cite{MR3667429} it was shown that the real part of $P_2$ is a conservation law for critical points of a certain curvature entropy functional. For this reason we call $\seq$ the \textit{entropy differentials}. In fact, it was shown that if we write $g=\exp(2u)\d z\circ \d \bar z$ for some real-valued function $u$ and some local $Q$-adapted holomorphic coordinate $z$ so that $Q=\d z^2$, then $P_2$ is given by
\begin{equation}\label{entropycoord}
P_2=-2\left(\partial^2_{zz} u+\left(\partial_z u\right)^2\right)\d z^2. 
\end{equation}
\end{rmk}
\subsection{Invariance properties} The entropy sequence enjoys certain invariance properties which we will now discuss. First we observe (see also~\cite{MR3667429}):
\begin{ppn}\label{mainthmp2}
The entropy sequence is intrinsic, i.e.~$P_{\ell+2}$ depends on the induced metric and its derivatives up to order $\ell+4$ only. In particular, $P_{\ell+2}$ is invariant under post-composing $\bx : M \to \E^3$ with an ambient isometry.
\end{ppn}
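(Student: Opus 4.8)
The plan is to remove the Gauss map from the definition $P_\ell=\mathrm{S}^{{}^{g_0}\nabla}_\ell(G)$ and to re-express the whole sequence through the intrinsic pair $(K,{}^{g_0}\nabla)$. The entry point is the coordinate formula \eqref{entropycoord}, which already writes $P_2=-2\bigl(\partial^2_{zz}u+(\partial_z u)^2\bigr)\d z^2$ in a $Q$-adapted coordinate $z$, where $g=\e^{2u}\,\d z\circ\d\bar z$. By (the proof of) \cref{levihopf=leviflat} such coordinates are precisely the flat coordinates of $g_0=\sqrt{-K}\,g$, so there the connection coefficient $\gamma$ of ${}^{g_0}\nabla$ vanishes and $K=-\e^{-4u}$, i.e.\ $u=-\tfrac14\log(-K)$. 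Substituting this relation into \eqref{entropycoord} and using $\gamma=0$ to identify $\partial_z$ with $\nabla^{1,0}$, I would obtain
\[
P_2=\tfrac12\bigl(\log(-K)\bigr)''-\tfrac18\bigl((\log(-K))'\bigr)^2,
\]
where the primes now denote $\nabla^{1,0}$ with respect to ${}^{g_0}\nabla$. Although this identity is checked in a special coordinate, both sides are globally defined sections of $L^2$, so it holds tensorially; its right-hand side is built only from $K$ and the intrinsic connection, whence $P_2$ is intrinsic.

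The second step propagates this to every degree. By definition of the higher Schwarzian derivatives, $P_{\ell+1}=(P_\ell)'=\nabla^{1,0}P_\ell$, hence $P_{\ell+2}=(\nabla^{1,0})^\ell P_2$. Since both $P_2$ and the operator $\nabla^{1,0}$ (attached to ${}^{g_0}\nabla$, hence to $g$ and $K$) are intrinsic, every $P_{\ell+2}$ depends on $g$ alone. The ``in particular'' clause is then immediate: an ambient isometry $\Phi$ of $\E^3$ leaves the induced metric of $\Phi\circ\bx$ equal to $g$, so — even though it rotates $G$ — it cannot alter any quantity built solely from $g$, and $P_{\ell+2}$ is unchanged.

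The remaining and genuinely fiddly step is the derivative count, which I would carry out in an arbitrary holomorphic coordinate $z$, tracking the highest number of derivatives of $g$ appearing. Here $K$ is second order in $g$, while the connection coefficient $\gamma=2\partial_z v$ of ${}^{g_0}\nabla$ (where $g_0=\e^{2v}\,\d z\circ\d\bar z$, so that $v=w+\tfrac14\log(-K)$ when $g=\e^{2w}\,\d z\circ\d\bar z$) equals $\tfrac12\partial_z\log(-K)+2\partial_z w$ and is therefore third order. From the intrinsic formula one reads off that $P_2$ is fourth order. Since $P_{\ell+1}=\bigl(\partial_z p_\ell-\ell\gamma p_\ell\bigr)\d z^{\ell+1}$ by \eqref{defopcoord}, the main point to verify is that multiplication by the third-order $\gamma$ never raises the top-order term of $p_\ell$ once $p_\ell$ has order $\geq 4$; granting this, each application of $\nabla^{1,0}$ raises the order by exactly one, so $P_{\ell+2}$ depends on $g$ and its derivatives up to order $\ell+4$, as claimed. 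This last bookkeeping — keeping the $\gamma$-contributions strictly below the leading $\partial_z$-term — is the only place where care is required.
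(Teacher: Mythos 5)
Your argument is correct and is essentially the paper's own: both proofs substitute $u=-\tfrac14\log(-K)$ into the $Q$-adapted coordinate formula \eqref{entropycoord} to exhibit $P_2$ as a fourth-order expression in $g$ alone, and then deduce the bound for $P_{\ell+2}$ by iterating $\nabla^{1,0}$ with respect to ${}^{g_0}\nabla$. The only difference is presentational: the paper packages the intrinsic formula as the real tensor $T=\tfrac14\,{}^{\hat g}\mathring{\nabla}^2\log(-K_g)$ built from the auxiliary metric $\hat g=(-K)^{3/4}g$, identifying $T=\mathrm{Re}(P_2)$, whereas you write $P_2=\tfrac12\bigl(\log(-K)\bigr)''-\tfrac18\bigl((\log(-K))'\bigr)^2$ directly and spell out the $\gamma$-bookkeeping in the derivative count that the paper leaves implicit.
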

\begin{proof}
We work in a local $Q$-adapted local coordinate $z$ so that $Q=\d z^2$ and $g=\e^{2u}\d z \circ \d \bar z$. Consider the metric $\hat{g}=(-K)^{3/4}g$ with Gauss curvature
$$
K_{\hat{g}}=\frac{1}{2}|K_g|^{1/4}
$$
where $K_g$ denotes the Gauss curvature of $g$. Now define
$$
T={}^{\hat{g}}\mathring{\nabla}^2\log(K_{\hat{g}})=\frac{1}{4}{}^{\hat{g}}\mathring{\nabla}^2\log(-K_g)
$$
where ${}^{\hat{g}}\mathring{\nabla}^2$ denotes the trace-free Hessian of $\hat{g}$. Clearly, the symmetric traceless covariant $2$-tensor field $T$ depends on $g$ and its derivatives up to order four. Using the identity $u=-\frac{1}{4}\log(-K_g)$, we compute
$$
\aligned
T&=-{}^{g_0}\mathring{\nabla}^2u-\d u^2 +\frac{1}{2}g_0({}^{g_0}\nabla u,{}^{g_0}\nabla u)g_0\\
&=-2\operatorname{Re}\left(\left(\partial^2_{zz}u+(\partial_z u)^2\right)\d z^2\right),
\endaligned
$$
which agrees with the real part of~\eqref{entropycoord}. It follows that $P_2$ depends on the induced metric only. Since $P_{\ell+2}$ is just the $\ell$-th derivative of $P_2$ with respect to the Levi-Civita connection of the induced flat metric $g_0$, we see that $P_{\ell+2}$ depends on the induced metric and its derivatives up to order $\ell+4$ only. 
\end{proof} 
In order to discuss the further invariance properties we first recall the Weierstrass representation of minimal surfaces. We consider $\C^3$ and let $\phi$ denote the natural complex inner product 
$$
\phi(\mathbf{z},\mathbf{w})=z_1w_1+z_2w_2+z_3w_3,
$$
where $\mathbf{z}=(z_i)$ and $\mathbf{w}=(w_i)$ are elements of $\C^3$. Weierstrass observed that if $(M,J)$ is a Riemann surface and $\tilde{\mathbf{X}} : M \to \C^3$ is a holomorphic null immersion, i.e.~$\tilde{\mathbf{X}}^*\phi=0$, then $\mathbf{X}=\mathrm{Re}\circ \tilde{\mathbf{X}} : M \to \R^3$ is a conformal and minimal immersion of $(M,J)$. Conversely, every minimal immersion of a simply connected surface $M$ arises in this way. 

Having a holomorphic null immersion $\tilde{\bx} : M \to \C^3$, the triple $(M,G,\eta)$, where $G$ is the stereographically projected Gauss map of the minimal immersion $\mathrm{Re}(\tilde{\bx})$ and $\eta=\d \tilde{\bx}^3$ is called the ~\textit{Weierstrass data} of the null immersion $\tilde{\bx}$. Standard computations give (see for instance~\cite{karcher}) 
\begin{equation}\label{relheighthopf}
Q=-\frac{1}{G}\d G \circ \eta,
\end{equation}
where $Q$ denotes the Hopf differential of $\mathrm{Re}(\tilde{\bx})$. 

A (global) method of producing a holomorphic null immersion of a Riemann surface $(M,J)$ into $\C^3$ from Weierstrass data was obtained by Osserman~\cite{MR0179701}:
\begin{thm}\label{osswei}
Let $G : M \to \hat{\C}$ be a meromorphic function and $\eta$ a holomorphic $1$-form on $M$. Suppose that:
\begin{itemize}
\item[(i)] The zeroes of $\eta$ coincide with the poles and zeros of $G$, with the same order;
\item[(ii)] For any closed curve $\gamma\subset M$,
$$
\overline{\int_{\gamma}G\eta} = \int_{\gamma}\frac{\eta}{G}, \quad \mathrm{Re}\int_{\gamma}\eta=0, 
$$
where $\bar{z}$ denotes complex conjugation of $z \in \C$; 
\end{itemize}
then
$$
\tilde{\bx}(p)-\tilde{\bx}(p_0)=\int_{p_0}^p\left(\frac{1}{2}(G^{-1}-G),\frac{\i}{2}(G^{-1}+G),1\right)\eta,
$$
yields a holomorphic null immersion $\tilde{\bx} : M \to \C^3$ so that $\Re(\tilde{\bx})$ has Weierstrass data $(M,G,\eta)$.
\end{thm}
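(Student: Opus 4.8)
The plan is to realise $\tilde\bx$ as the integral of an explicit holomorphic $\C^3$-valued $1$-form and to verify, in turn, that this form is globally holomorphic, null, and nowhere vanishing, and that the period conditions force its real part to be single-valued; the identification of the Weierstrass data then comes essentially for free. Concretely, I would set
$$
\omega=\left(\tfrac{1}{2}\bigl(G^{-1}-G\bigr),\ \tfrac{\i}{2}\bigl(G^{-1}+G\bigr),\ 1\right)\eta,
$$
so that the asserted formula reads $\tilde\bx(p)=\tilde\bx(p_0)+\int_{p_0}^p\omega$. The first task is to show that $\omega$ is holomorphic on all of $M$. Away from the zeros and poles of $G$ this is immediate. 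At a zero of $G$ of order $m$ the factor $G^{-1}$ produces a pole of order $m$ in the first two slots, and at a pole of $G$ of order $m$ the factor $G$ does the same; in either case hypothesis (i) supplies a zero of $\eta$ of exactly order $m$, cancelling the pole. Hence $\omega$ extends to a global holomorphic section.

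Next I would verify the null condition $\tilde\bx^{*}\phi=0$, i.e.\ $\omega_1^2+\omega_2^2+\omega_3^2=0$. Writing $\eta=f\,\d z$ locally, this reduces to the pointwise algebraic identity $\tfrac14(G^{-1}-G)^2-\tfrac14(G^{-1}+G)^2+1=0$, which holds because $(G^{-1}-G)^2-(G^{-1}+G)^2=-4$. By the Weierstrass observation recalled above, $\Re\int\omega$ is then automatically a conformal minimal immersion wherever $\omega\neq 0$. For the periods, writing $N=\oint_\gamma G\eta$ and $P=\oint_\gamma \eta/G$, the components of $\oint_\gamma\omega$ are $\tfrac12(P-N)$, $\tfrac{\i}{2}(P+N)$ and $\oint_\gamma\eta$; the first equation in (ii), namely $\bar N=P$, makes the real parts of the first two vanish (since $\bar N-N$ is imaginary and $\bar N+N$ is real), while the second equation in (ii) kills the real part of the third. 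Thus $\Re\oint_\gamma\omega=0$ for every closed $\gamma$, so $\Re(\tilde\bx)$ descends to a well-defined map $M\to\R^3$ (with $\tilde\bx$ itself defined on the universal cover, its residual periods being purely imaginary).

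It then remains to see that the immersion is genuine and that the recovered data are $(M,G,\eta)$. Computing the conformal factor of the induced metric in the coordinate $z$ shows it to be a positive multiple of $|f|^2(1+|G|^2)^2/|G|^2=|f|^2\bigl(|G|^{-1}+|G|\bigr)^2$; since $|G|^{-1}+|G|\geqslant 2$ this is strictly positive away from the zeros of $\eta$, and at a zero or pole of $G$ hypothesis (i) again forces $|f|/|G|$ (respectively $|f|\,|G|$) to be finite and nonzero, so the factor is everywhere positive and $\omega$ never vanishes. Finally, $\d\tilde\bx^3=\eta$ by construction, and the algebraic relations $\omega_1-\i\omega_2=G^{-1}\eta$ and $\omega_1+\i\omega_2=-G\,\eta$ recover $G=\d\tilde\bx^3/(\d\tilde\bx^1-\i\,\d\tilde\bx^2)$; comparing this with the standard expression for the stereographically projected Gauss map of $\Re(\tilde\bx)$ in the convention of \cref{gaussholo} identifies $G$ as its Gauss map.

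The step I expect to require the most care is the bookkeeping at the zeros and poles of $G$: hypothesis (i) must be invoked twice, first to cancel the poles of the integrand so that $\omega$ is globally holomorphic, and again to guarantee that the induced metric neither degenerates nor blows up at these points. It is precisely the exact matching of orders in (i) that makes both succeed simultaneously, and this is the crux of the argument; the period computation, by contrast, is a short linear manipulation of $N$ and $P$, and the null and Gauss-map identities are purely algebraic.
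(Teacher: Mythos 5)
Your proposal is correct: the paper does not prove \cref{osswei} itself but quotes it as a classical result of Osserman~\cite{MR0179701}, and your argument is precisely the standard proof of that result --- global holomorphicity of $\omega$ from the exact order matching in (i), the pointwise null identity $(G^{-1}-G)^2-(G^{-1}+G)^2=-4$, vanishing of the real periods from (ii) via $N$ and $P$, positivity of the conformal factor $|f|^2\bigl(|G|^{-1}+|G|\bigr)^2$ at the zeros and poles of $G$, and the identification $G=\d\tilde\bx^3/(\d\tilde\bx^1-\i\,\d\tilde\bx^2)$ with the stereographically projected Gauss map. Your parenthetical remark that $\tilde\bx$ is in general only single-valued on the universal cover, with purely imaginary residual periods while $\Re(\tilde\bx)$ descends to $M$, correctly handles the one point where the theorem's phrasing requires interpretation.
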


The natural left action of the linear conformal group $\C^*\times \mathrm{SO}(3,\C)$ on $\C^3$ yields a left action on the space of holomorphic null immersions and consequently on the space $\mathcal{W}(M)$ of minimal immersions of $M$ arising via the Weierstrass representation. For an element $\bx : M \to \E^3 \in \mathcal{W}(M)$ we call the deformations of $\bx$ obtained by the $\C^*\times \mathrm{SO}(3,\C)$ action its~\textit{Weierstrass deformations}. Next, following~\cite{arxiv:dg-ga/9512003}, we study the space of Weierstrass deformations more carefully. 

The action by the subgroup $\R^+\times \mathrm{SO}(3,\R)$ corresponds to similarity transformations of the minimal surface associated to the null immersion. It follows that the space of non-similar Weierstrass deformations is the homogeneous space
$$
\left(\C^*\times \mathrm{SO}(3,\C)\right)/\left(\R^+\times \mathrm{SO}(3,\R)\right)\simeq S^1\times \left(\mathrm{SO}(3,\C)/\mathrm{SO}(3,\R)\right).
$$
The first circle factor yields the well-known \textit{associated family} (or \textit{Bonnet family}) of minimal surfaces. The associated family of minimal surfaces has the properties of being locally isometric and sharing a common Gauss map. Furthermore, the Hopf differential $Q$ changes by a complex phase. The latter factor gives rise to the so-called \textit{Goursat family} of minimal surfaces. The group $\mathrm{SO}(3,\C)\simeq \mathrm{PSL}(2,\C)$ acts on the Gauss-map by M\"obius transformation and leaves the Hopf differential unchanged (c.f.~\cite[Lemma 5.3.1]{MR2004958} or~\cite{arxiv:dg-ga/9512003}). 

Since under a Bonnet-transform the induced metric is unchanged, so is the induced flat metric. It follows that the differentials $\seq$ are the same for the whole $S^1$-family of associated minimal surfaces. Furthermore, since the Hopf differential is unchanged under a Goursat transform, so is the Levi-Civita connection of the second fundamental form ${}^A\nabla$. \cref{levihopf=leviflat} implies that the Levi-Civita connection ${}^{g_0}\nabla$ of the flat metric is invariant under Goursat transform as well (this is  noteworthy since the induced metric itself does change non-trivially under Goursat transforms). Hence it follows from the invariance of the Schwarzian derivative under post-composition by a M\"obius transformation that $P_2$ and hence all differentials $\seq$ are invariant under Goursat transforms. Concluding, we have:
\begin{ppn}\label{mainthmp3}
The meromorphic differentials $\seq$ are invariant under Gour\-sat -- and Bonnet transforms. 
\end{ppn}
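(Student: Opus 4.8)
The plan is to treat the two transforms separately, since the entropy sequence is preserved for quite different reasons in each case. For the Bonnet (associated) family I would appeal directly to the intrinsicality established in \cref{mainthmp2}. The members of the associated $S^1$-family are mutually locally isometric and in fact share the very same induced metric $g$ (in a $Q$-adapted coordinate both the conformal factor $\e^{2u}$ and the underlying Riemann surface are unchanged, only the phase of the Hopf differential rotates). Since by \cref{mainthmp2} each $P_{\ell}$ is a universal expression in $g$ and its derivatives, the differentials $\seq$ must literally coincide across the family. No computation beyond invoking \cref{mainthmp2} is required here; the only input is the standard fact, which I would cite from the Weierstrass-representation literature, that the first circle factor acts by local isometries fixing the Gauss map.

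For the Goursat family the intrinsic argument is unavailable, because the induced metric $g$ genuinely changes under the $\mathrm{SO}(3,\C)/\mathrm{SO}(3,\R)$-action; what is invariant is not $g$ but the flat Levi-Civita connection ${}^{g_0}\nabla$. I would run the following chain. First, recall that $\mathrm{SO}(3,\C)\simeq \mathrm{PSL}(2,\C)$ leaves the Hopf differential $Q$ unchanged while acting on the Gauss map by a M\"obius transformation $G\mapsto a\circ G$. Since the Lorentzian metric $A=\Re(Q)$ is therefore invariant, so is its Levi-Civita connection ${}^A\nabla$; by \cref{levihopf=leviflat} we have ${}^A\nabla={}^{g_0}\nabla$ on $M^{\prime}$, so ${}^{g_0}\nabla$ is invariant under Goursat transforms as well. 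It then remains to control the effect of $G\mapsto a\circ G$ on $P_2=\mathrm{S}^{{}^{g_0}\nabla}_2(G)$: by the cocycle identity~\eqref{cocycle} together with the vanishing~\eqref{moebvanish}, the Schwarzian derivative is unchanged under post-composition by a M\"obius transformation, so $P_2$ is invariant. Finally, because $P_{\ell+1}=(P_{\ell})^{\prime}=\nabla^{1,0}P_{\ell}$ is obtained by repeatedly differentiating with the connection ${}^{g_0}\nabla$ that we have just shown to be invariant, invariance of $P_2$ propagates inductively to the entire sequence $\seq$.

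The crux of the argument — and the point I expect to require the most care — is this Goursat case, precisely because the induced metric is \emph{not} preserved, so an intrinsic argument cannot work. The resolution hinges entirely on \cref{levihopf=leviflat}: although ${}^{g_0}\nabla$ is built from the non-invariant metric $g_0=\sqrt{-K}\,g$, it coincides with ${}^A\nabla$, which is governed by the invariant Hopf differential. In this way the two ingredients entering $P_{\ell}$, namely the connection ${}^{g_0}\nabla$ and the Gauss map $G$, transform in exactly the compensating fashion demanded by the M\"obius-invariance of the higher Schwarzian operators. The remaining inputs — invariance of $Q$ and the M\"obius transformation law for $G$ under the $\mathrm{SO}(3,\C)$-action — are standard properties of the Weierstrass data that I would simply quote rather than re-derive.
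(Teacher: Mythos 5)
Your proposal is correct and takes essentially the same route as the paper: the Bonnet case via invariance of the induced metric (hence of $g_0$ and, with the common Gauss map, of all $P_{\ell}$), and the Goursat case via invariance of the Hopf differential, \cref{levihopf=leviflat} giving ${}^{g_0}\nabla={}^A\nabla$, and the M\"obius invariance of the Schwarzian derivative, propagated to the whole sequence $\seq$ by differentiating with the invariant connection. The paper's argument is exactly this chain, stated in the paragraph preceding \cref{mainthmp3}, including your observation that the induced metric itself changes non-trivially under Goursat transforms while its flat Levi-Civita connection does not.
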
 

Finally, note that scaling the immersion $\bx : M \to \E^3$ by a constant does neither change the Levi-Civita connection of the induced flat metric, nor the Gauss map and hence leaves the sequence $\seq$ unchanged. This fact together with the content of \cref{mainthmp1}, \cref{mainthmp2} and \cref{mainthmp3} is summarized in \cref{mainthma} of the introduction. 

\section{The degree of a minimal surface}
The existence of an intrinsic sequence of meromorphic differentials on a minimal surface motivates the following definition:
\begin{defn}
Let $M$ be an oriented smooth surface. A non-flat minimal immersion $\bx : M \to \E^3$ is said to have \textit{degree} $n \in \mathbb{N}$ if $P_n$ vanishes identically or if there exists a weighted-homogeneous polynomial $\psi_n : \C^{n-2} \to \C$ of degree $n$ with weights $(2,3,\ldots,n-1)$ such that
\begin{equation}\label{degreecond}
P_n=\psi_n(P_2,\ldots P_{n-1}).
\end{equation}
Furthermore, we call $\psi_n$ the \textit{algebraic type} of the immersion $\bx$. Clearly, if a non-flat minimal immersion has degree $n$, then it also has degree $m$ for all $m\geqslant n$. The degree will therefore always denote the smallest integer for which a relation of the form~\eqref{degreecond} holds. 
\end{defn}
\begin{rmk}
Recall, polynomial $\psi_n(z_1,z_2,\ldots,z_{n-1})$ which does not vanish identically is called \textit{weighted-ho\-mo\-ge\-ne\-ous of degree $n$} if there exist positive integers $(w_1,\ldots,w_{n-1})$, called the \textit{weights of the variables}, such that for every $\lambda \neq 0$ 
$$
\psi_n(\lambda^{w_1}z_1,\lambda^{w_2}z_2,\ldots,\lambda^{w_{n-1}}z_{n-1})=\lambda^n\psi_n(z_1,z_2,\ldots,z_{n-1}).
$$
\end{rmk}
\begin{rmk}
A definition of degree for constant mean curvature surfaces without umbilic points was previously given by Pinkall and Sterling~\cite{MR1014929}.
\end{rmk}

\begin{ex}[Enneper's surface]
Enneper's surface is the minimal surface with Weierstrass data $M=\C$, $G(z)=z$ and $\eta(z)=z \d z$. From~\eqref{relheighthopf} we compute
$$
Q(z)=-\d z^2
$$
and hence using~\eqref{schwarzwithq} we obtain
$$
P_2(z)=\left\{z,z\right\}\d z^2=0. 
$$
Therefore, Enneper's surface has the lowest possible degree two. Conversely, it was shown in~\cite{MR3667429} that a minimal surface satisfying $P_2=0$ is -- up to Euclidean motion and scaling -- an open subset of Enneper's surface. Thus Enneper's surface and its Weierstrass deformations exhaust all degree $2$ surfaces. Note that Enneper's Weierstrass deformations are just similarity transforms.\end{ex}

\begin{rmk}
A characterisation of Enneper's surface in terms of so-called Chern--Ricci functions was given in \cite{MR3885861}. 
\end{rmk}

\begin{ex}[The helicoid \& catenoid] The helicoid is the minimal surface with Weierstrass data $M=\C$, $G(z)=\e^z$ and $\eta(z)=\i \d z$. From~\eqref{relheighthopf} we compute
$$
Q(z)=-\i \d z^2
$$
and hence using~\eqref{schwarzwithq} we obtain
$$
P_2(z)=\left\{\e^z,z\right\}\d z^2=-\frac{1}{2}\d z^2,
$$
so that
$$
P_3(z)=\partial_z\left(-\frac{1}{2}\right)\d z^3=0, 
$$
showing that the helicoid is a degree $3$ surface (and consequently so is the catenoid, since it is a member of the helicoid's associated family of minimal surfaces). Conversely, it was proved in~\cite{MR3667429} that a minimal surface satisfying $P_2=\lambda Q$ for some non-zero complex constant $\lambda$ is -- up to Euclidean motion, scaling and Goursat transform -- an open subset of a member of the helicoid/catenoid family. Since $P_2=\lambda Q$ precisely characterizes the degree $3$ surfaces, it follows that the helicoid and its Weierstrass deformations exhaust all degree $3$ surfaces.  
\end{ex}

\subsection{Degree four surfaces}
A minimal immersion with entropy differentials $\seq$ is of degree $4$ if and only if there exists a complex constant $\lambda$ such that
\begin{equation}\label{degreefourcond}
P_4+\lambda (P_2)^2=0.
\end{equation}
Therefore, we have a (complex) one-dimensional space of algebraic types of degree $4$ immersions and a complete classification is not feasible. However, the space of algebraic types of degree four surfaces simplifies in the presence of umbilic points. 
\begin{ppn}\label{polyconstraint}
Suppose $\bx : M\to \mathbb{E}^3$ is a smooth minimal immersion of degree four whose Hopf differential vanishes to order $n\in\mathbb{N}$ at some umbilic point. Then the Hopf differential vanishes to order $n$ at every umbilic point and the immersion $\bx$ satisfies
$$
P_4+\frac{12(n+2)^2}{(3n^2+4n)}(P_2)^2=0. 
$$  
\end{ppn}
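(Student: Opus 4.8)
The plan is to extract everything from the residue computation already carried out in the proof of \cref{mainthmp1}, which records the most singular term of each $P_\ell$ at an umbilic point. First I would pin down the shape of the degree four condition: since $P_2,P_3$ carry weights $2,3$ and there is no differential of weight $1$, the only weighted-homogeneous monomial of degree $4$ in $P_2,P_3$ is $(P_2)^2$. Hence the degree four hypothesis is equivalent to the existence of a single complex constant $\lambda$ with $P_4+\lambda(P_2)^2=0$, as in \eqref{degreefourcond}. Because $\bx$ has an umbilic point, $P_2$ acquires a genuine pole there and in particular $P_2\not\equiv 0$, so this $\lambda$ is well-defined.

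Next I would localize at an umbilic point $p$ at which $Q$ vanishes to order $n$ and work in a $p$-centred $Q$-adapted holomorphic coordinate $z$ exactly as in the proof of \cref{mainthmp1}. There one has $P_2=z^{-2}f_{2,n}(z)\,\d z^2$ and $P_4=z^{-4}f_{4,n}(z)\,\d z^4$ with $f_{2,n},f_{4,n}$ holomorphic near $0$ and leading coefficients given by the residue formula,
$$
f_{2,n}(0)=-\frac{3n^2+4n}{8},\qquad f_{4,n}(0)=c_{4,0}=-\frac{3(n+2)^2(3n^2+4n)}{16}.
$$
Squaring gives $(P_2)^2=z^{-4}f_{2,n}(z)^2\,\d z^4$, whose leading coefficient is $f_{2,n}(0)^2=(3n^2+4n)^2/64$. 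Reading off the coefficient of $z^{-4}\d z^4$ in the identity $P_4+\lambda(P_2)^2=0$ then yields the single scalar equation $f_{4,n}(0)+\lambda f_{2,n}(0)^2=0$, and solving it produces precisely $\lambda=12(n+2)^2/(3n^2+4n)$. I would stress that only this top pole coefficient is needed to determine $\lambda$; the lower-order pole coefficients and the holomorphic parts match automatically, since the relation is assumed to hold as an identity of meromorphic differentials.

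For the uniformity claim I would observe that $\lambda$ is one global constant, so repeating the computation at any other umbilic point $p'$, where $Q$ vanishes to some order $n'$, forces $a_n=a_{n'}$ with $a_m:=12(m+2)^2/(3m^2+4m)$. It then remains to check that $m\mapsto a_m$ is injective on $\mathbb{N}$; writing $a_m=4+\frac{32m+48}{3m^2+4m}$ exhibits a strictly decreasing correction term, so $a_m$ is strictly decreasing and hence injective, giving $n'=n$. Thus $Q$ vanishes to the common order $n$ at every umbilic point and $\lambda=12(n+2)^2/(3n^2+4n)$, as claimed. The only genuine bookkeeping is the extraction of the two leading coefficients in the second step, which is already available from \cref{mainthmp1}; I expect the uniformity-of-$n$ assertion to be the only place requiring a fresh (though elementary) input, namely the monotonicity of $a_m$.
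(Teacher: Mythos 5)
Your proof is correct and takes essentially the same route as the paper's: both compare the leading Laurent coefficients (residues) of $P_4$ and $(P_2)^2$ at the umbilic point via \cref{mainthmp1} to solve for $\lambda=12(n+2)^2/(3n^2+4n)$, and then invoke injectivity of $n\mapsto a_n$ to force a common vanishing order at all umbilic points. Your explicit check that $a_m=4+\frac{32m+48}{3m^2+4m}$ is strictly decreasing supplies a detail the paper merely asserts.
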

\begin{proof}
Let $p \in M$ denote the umbilic point at which the Hopf differential vanishes to order $n$. Then \cref{mainthmp1} implies that
$$
\mathrm{Res}_p(P_2)=-\frac{3n^2+4n}{8}. 
$$
and hence
$$
\mathrm{Res}_p((P_2)^2)=\left(\mathrm{Res}_p(P_2)\right)^2=\left(\frac{3n^2+4n}{8}\right)^2.
$$
Since $\bx$ has degree four there exists a complex-constant $\lambda$ such that $P_4-6\lambda(P_2)^2=0$ and hence using \cref{mainthmp1} we must have
$$
\mathrm{Res}_p(P_4)=-\frac{3}{16}(n+2)^2(3n^2+4n)=6\lambda \left(\mathrm{Res}_p(P_2)\right)^2=6\lambda\left(\frac{3n^2+4n}{8}\right)^2,
$$
which implies
$$
P_4+\frac{12(n+2)^2}{(3n^2+4n)}(P_2)^2=0,
$$
as claimed. Furthermore, note that the sequence $a_n=\frac{12(n+2)^2}{(3n^2+4n)}$ is injective which excludes the possibility of having two umbilic points at which the Hopf differential vanishes to different order. 
\end{proof}
\begin{ex}[Enneper surfaces of higher dihedral symmetry] The minimal surfaces arising from the Weierstrass data $M=\C$, $G(z)=z^k$ and $\eta(z)=z^k\d z^k$ for some integer $k\geqslant 2$ are called Enneper surfaces of higher dihedral symmetry. From~\eqref{relheighthopf} we obtain 
\begin{equation}\label{hopfgenenep}
Q(z)=-kz^{k-1}\d z^2,
\end{equation}
showing that these surfaces have an umbilic point at which the Hopf differential vanishes to order $k-1$. Therefore, by \cref{polyconstraint}, if the Enneper surfaces of higher dihedral symmetry have degree four, then they will have to satisfy
$$
P_4+12\frac{(k+1)^2}{(k-1)(3k+1)}(P_2)^2=0.
$$
From~\eqref{eqforgamma} and~\eqref{hopfgenenep} we get $\gamma=\frac{(k-1)}{2z}$, hence with~\eqref{schwarzwithq} we obtain
$$
\aligned
P_2(z)&=\left(\left\{z^k,z\right\}-\frac{(k-1)(k+3)}{8z^2}\right)\d z^2\\
&=-\left(\frac{(k-1)(k+1)}{2z^2}+\frac{(k-1)(k+3)}{8z^2}\right)\d z^2\\
&=-\frac{1}{8z^2}(k-1)(3k+1)\d z^2.
\endaligned
$$
Therefore, using~\eqref{defopcoord} we have
$$
\aligned
P_3(z)&=\left(-\frac{1}{8}(k-1)(3k+1)\partial_z\left(\frac{1}{z^2}\right)+\frac{1}{4}\frac{(k-1)}{2z}\left(\frac{(k-1)(3k+1)}{z^2}\right)\right)\d z^3\\
&=\frac{1}{8z^3}(k-1)(k+1)(3k+1)\d z^3,
\endaligned
$$
and likewise
$$
\aligned
P_4(z)&=\left(-\frac{3}{8z^4}(k-1)(k+1)(3k+1)\right.\\
&\phantom{=}\left.-\frac{3}{2z}(k-1)\frac{1}{8z^3}(k-1)(k+1)(3k+1)\right)\d z^4\\
&=-\frac{3}{16z^4}(k+1)^2(k-1)(3k+1)\d z^4.
\endaligned
$$
Hence we obtain
$$
P_4+12\frac{(k+1)^2}{(k-1)(3k+1)}(P_2)^2=0,
$$
which agrees with \cref{polyconstraint}. 
\end{ex}
The sequence 
$$
a_n=\frac{12(n+2)^2}{(3n^2+4n)}
$$
describing the admissible coefficients for degree four minimal immersions with umbilic points satisfies $\lim_{n \to \infty}\lambda_n=4$. It is therefore natural to ask what (umbilic-free) minimal surfaces satisfy $P_4+4(P_2)^2=0$. We will next study a $1$-parameter family of such surfaces.  
\begin{ex}[Limit degree four surfaces]
Let $M=\C$ with $G_t(z)=\frac{1}{t}\e^{-z}$ and $\eta_t(z)=2t\e^{z}\d z$ where $t>0$ is a real parameter. From this Weierstrass data we obtain a family of minimal immersions $\bx_t : \R^2 \to \R^3$ given by
$$
(x,y) \mapsto \left(x-\frac{t^2}{2}\mathrm{e}^{2x}\cos(2y),y+\frac{t^2}{2}\mathrm{e}^{2x}\sin(2y),-2t\mathrm{e}^x\cos(y)\right)
$$
where $z=x+\i y$ denotes the standard coordinate on $\C\simeq \R^2$. Note that the immersion $\bx_0$ yields the plane. The induced metrics are
$$
g_t=\left(1+t^2\mathrm{e}^{2x}\right)^2\left(\d x^2+\ \d y^2\right),
$$
and the second fundamental forms are 
$$
A_t=2t\mathrm{e}^x\left(\cos(y)\left(\d x^2-\d y^2\right)-2\sin(y)\left(\d x \circ \d y\right)\right).
$$
The metrics $g_t$ have Gauss curvature
$$
K_t=-\frac{4t^2\mathrm{e}^{2x}}{\left(1+t^2\mathrm{e}^{2x}\right)^4}
$$
Writing $\Sigma_t=\bx_t(\C)$, these surfaces are singly periodic with respect to the action  $\Sigma_t \times \mathbb{Z} \to \Sigma_t$ 
$$
(p,n)\mapsto p+\left(\begin{array}{c} 0 \\ 2n\pi \\ 0 \end{array}\right). 
$$
for $n \in \mathbb{Z}$ and $p \in \Sigma$. The fundamental domain $\bx_t(\R \times [-\pi,\pi])$ is symmetric around $y=0$ and contains two straight lines defined by $y=\pm(1/2)\pi$. 

Note moreover that $\partial_y$ is a Killing vector field for $g_t$ which is the imaginary part of a holomorphic vector field.

From~\eqref{relheighthopf} we obtain 
$$
Q(z)=2t\,\e^z\d z^2
$$
and from~\eqref{eqforgamma} we get $\gamma=\frac{1}{2}$, hence with~\eqref{schwarzwithq} we obtain $P_2(z)=-\frac{3}{8}\d z^2$. Likewise we obtain $P_3(z)=\frac{3}{8}\d z^3$ and $P_4=-\frac{9}{16}\d z^4$ so that
$$
P_4+4(P_2)^2=0. 
$$

\end{ex}

\begin{figure}[h!]
\centering
\includegraphics[trim=0cm 3cm 0cm 1cm,clip=true,scale=0.9]{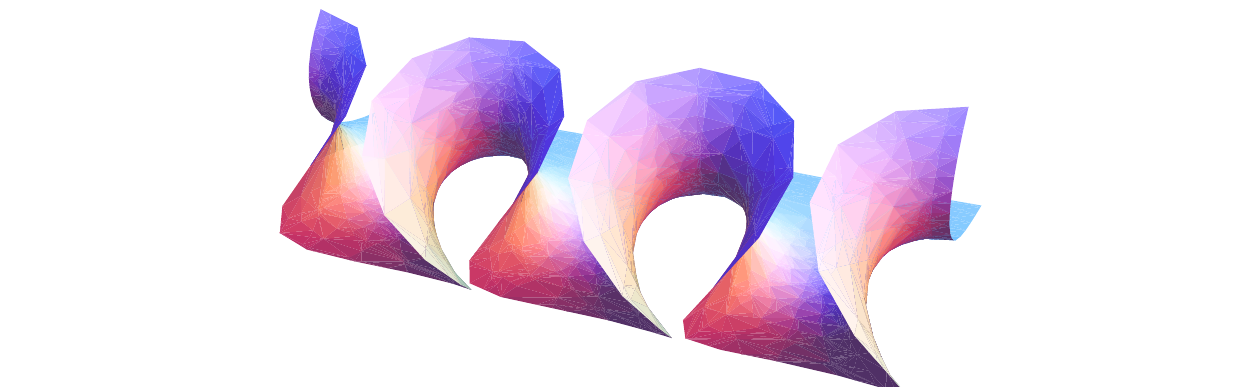}
\caption{A portion of the image of $\bx_1$.}
\end{figure}

\subsection{Degree five surfaces}
A minimal immersion with entropy differentials $\seq$ is of degree $5$ if and only if there exists a complex constant $\lambda$ such that
\[
P_5+\lambda P_2P_3=0.
\]
Therefore, again, we have a (complex) one-dimensional space of algebraic types of degree $5$ immersions and a complete classification is not feasible. However, in entirely similar fashion to the degree four case we can prove:
\begin{ppn}
Suppose $\bx : M\to \mathbb{E}^3$ is a smooth minimal immersion of degree five whose Hopf differential vanishes to order $n\in\mathbb{N}$ at some umbilic point. Then the Hopf differential vanishes to order $n$ at every umbilic point and the immersion $\bx$ satisfies
$$
P_5+24\frac{(n+2)^2}{(3n+4)n}P_3P_2=0. 
$$  
\end{ppn}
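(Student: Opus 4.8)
The plan is to follow the template of the degree four case in \cref{polyconstraint} almost verbatim, replacing the squared differential $(P_2)^2$ by the product $P_3P_2$ and matching residues at the umbilic point.

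First I would pin down the algebraic type. By definition, degree five means that $P_5$ equals a weighted-homogeneous polynomial of degree $5$ in $P_2,P_3,P_4$ with weights $(2,3,4)$. The only monomial $P_2^aP_3^bP_4^c$ satisfying $2a+3b+4c=5$ is $P_2P_3$ (namely $a=b=1$, $c=0$), so the space of admissible algebraic types is one-dimensional and the degree five condition necessarily takes the form $P_5+\lambda P_3P_2=0$ for a single complex constant $\lambda$.

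Next I would compute the relevant residues at the umbilic point $p$ using the residue formula of \cref{mainthmp1},
$$
\mathrm{Res}_p(P_{\ell})=\left(-\tfrac{1}{2}\right)^{\ell+1}(\ell-1)!\,(n+2)^{\ell-2}(3n^2+4n).
$$
Specialising to $\ell=2,3,5$ gives $\mathrm{Res}_p(P_2)=-\tfrac{1}{8}(3n^2+4n)$, $\mathrm{Res}_p(P_3)=\tfrac{1}{8}(n+2)(3n^2+4n)$ and $\mathrm{Res}_p(P_5)=\tfrac{3}{8}(n+2)^3(3n^2+4n)$. The one step meriting a word of justification is that residues multiply for the product $P_3P_2$: in a $p$-centred holomorphic coordinate $z$ we have $P_2=(f_{2,n}/z^2)\,\d z^2$ and $P_3=(f_{3,n}/z^3)\,\d z^3$ with $f_{\ell,n}$ holomorphic near $0$ and $f_{\ell,n}(0)\neq 0$, so that $P_3P_2=(f_{3,n}f_{2,n}/z^5)\,\d z^5$ has a pole of order exactly $5$ and hence $\mathrm{Res}_p(P_3P_2)=\mathrm{Res}_p(P_3)\,\mathrm{Res}_p(P_2)$. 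Taking residues of the relation $P_5+\lambda P_3P_2=0$ and solving for $\lambda$ then yields
$$
\lambda=-\frac{\mathrm{Res}_p(P_5)}{\mathrm{Res}_p(P_3)\,\mathrm{Res}_p(P_2)}=24\,\frac{(n+2)^2}{(3n+4)n},
$$
where I use the factorisation $3n^2+4n=n(3n+4)$.

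Finally I would establish the uniformity of the vanishing order exactly as in \cref{polyconstraint}: the constant $\lambda$ is the same for the whole surface, yet the formula exhibits $\lambda=\lambda_n$ as an injective function of $n$, so the Hopf differential cannot vanish to two different orders at two distinct umbilic points. I expect no genuine obstacle in this argument; it is a residue computation strictly parallel to the degree four case, and the only mildly delicate point is verifying that $P_2P_3$ is the unique weighted-homogeneous degree five monomial, which is what forces the one-parameter form of the algebraic type and thereby pins down $\lambda$ by a single residue identity.
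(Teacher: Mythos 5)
Your proposal is correct and is exactly the argument the paper intends: the paper proves the degree four case by residue matching at an umbilic point (\cref{polyconstraint}) and explicitly states that the degree five case follows ``in entirely similar fashion'', which is precisely your computation, including the observation that $P_2P_3$ is the unique weighted-homogeneous monomial of degree $5$ and the injectivity of $a_n$ for the uniformity of the vanishing order. Your residue values $\mathrm{Res}_p(P_2)=-\tfrac18\,n(3n+4)$, $\mathrm{Res}_p(P_3)=\tfrac18\,(n+2)n(3n+4)$, $\mathrm{Res}_p(P_5)=\tfrac38\,(n+2)^3n(3n+4)$, the multiplicativity of residues for $P_3P_2$, and the resulting $\lambda=24(n+2)^2/\bigl(n(3n+4)\bigr)$ all check out against the formula in \cref{mainthmp1}.
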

The sequence
$$
a_n=24\frac{(n+2)^2}{(3n+4)n}
$$
satisfies $a_1=216/7$ and $a_{\infty}=\lim_{n \to \infty} a_n=8$. Both coefficients $a_1$ and $a_{\infty}$ are realized by well-known surfaces. 

\begin{ex}[Schwarz family] The Schwarz primitive triply periodic surface is the minimal surface with Weierstrass data
\[
M=
\left\{(z,w)\in \hat{\C}^2\,|\,w^2=z^8-14z^4+1\right\}
\]
and $G(z,w)=z$ and $\eta(z,w)=\frac{z}{w}\d z$. From~\eqref{relheighthopf} we compute
$$
Q(z,w)=-\frac{1}{w}\d z^2
$$
and hence using~\eqref{schwarzwithq} we obtain 
$$
P_2(z,w)=-\frac{42z^2(z^4+1)^2}{w^4}\d z^2.
$$
Simple computations give
$$
P_3(z,w)=\frac{84z(z^{16}+34z^{12}-34z^4-1)}{w^6}\d z^3, 
$$
and
$$
P_4(z,w)=-\frac{84}{w^8}(z^{24}+282z^{20}+1887z^{16}-884z^{12}+1887z^8+282z^4+1)\d z^4, 
$$
as well as
$$
P_5(z,w)=\frac{108864z^3(z^{24}+36z^{20}+69z^{16}-69z^8-36z^4-1)}{w^{10}}\d z^5. 
$$
Consequently, we have
$$
P_5+\frac{216}{7}P_2P_3=0, 
$$
thus showing that the Schwarz family of minimal surfaces has degree $5$ and realises the first possible coefficient $\frac{216}{7}$ among degree $5$ surfaces with umbilic points.
\end{ex}

The limit $a_{\infty}=8$ is also realized by a well-known family of minimal surfaces.
\begin{ex}[Scherk family]
The singly-periodic Scherk surface is the minimal surface with Weierstrass data $M=\hat{\C}\setminus\left\{\pm 1,\pm \i\right\}$, $G(z)=z$ and $\eta(z)=\frac{\i z}{(z^4-1)}\d z$. From~\eqref{relheighthopf} we compute
$$
Q(z)=-\frac{\i}{z^4-1}\d z^2
$$
and hence using~\eqref{schwarzwithq} we obtain
$$
P_2(z)=-\frac{6z^2}{(z^4-1)^2}.
$$
Simple computations give
$$
P_3(z)=\frac{12z(z^4+1)}{(z^4-1)^3},\quad P_4(z)=-\frac{12(z^8+10z^4+1)}{(z^4-1)^4} 
$$
and
\[
P_5(z)=\frac{576z^3(z^4+1)}{(z^4-1)^5},
\]
so that
$$
P_5+8P_2P_3=0,
$$
showing that Scherk's family of minimal surfaces has degree $5$ and realizes the limit coefficient $a_{\infty}=8$. 
\end{ex}

\subsection{Degree seven surfaces}
\begin{ex}

Taking $M=\hat{\C}\setminus\{z : z^k=1\}$ and $G(z)=z^{k-1}$ as well as $\eta(z)=\frac{z^{k-1}}{(z^k-1)^2}\d z$ gives the $k$-noids of Jorge \& Meeks~\cite{MR683761}. With the help of a computer algebra system one easily verifies that for $k>2$ the $k$-noids satisfy 
$$
0=P_7+a_kP_5P_2+b_kP_4P_3+c_kP_3(P_2)^2
$$
and hence are surfaces of degree $7$. The coefficients are
$$
\aligned
a_k&=\frac{16(3k-2)(k-2)}{3k^2-16k+8},\\
b_k&=\frac{8(27k^4-144k^3-56k^2+128k-32)}{(3k^2-16k+8)(k-2)(3k-2)},\\
c_k&=\frac{192k^2}{3k^2-16k+8}.
\endaligned
$$
\end{ex}

\begin{rmk}[Minimal surfaces of degree $6$]
The proof of \cref{thmb} given in \cref{sect:approx} allows to conclude that we obtain examples of immersed minimal surfaces of any degree, we were however not able to find well-known examples of embedded minimal surfaces of degree $6$. 
\end{rmk}


\section{Approximating minimal surfaces}\label{sect:approx}

It is natural to ask if a minimal surface can be approximated by a sequence of minimal surfaces of increasing degree. In this section we will show that locally and away from umbilic points this is indeed the case. 

Let therefore $M$ be an oriented surface and $\bx : M \to \E^3$ a non-flat minimal immersion. For $p \in M^{\prime}$ pick a simply connected neighbourhood $U_p$ so that $U_p$ does not contain any umbilic points. After possibly shrinking $U_p$ and applying an ambient rotation, we may assume that the stereographically projected Gauss map $G$ of $\bx$ does not attain the value $\infty$ on $U_p$. Let $Q$ denote the Hopf differential and $P_2$ denote the first entropy differential of $\bx$. We take a local $p$-centred holomorphic coordinate $z$ so that $Q=\d z^2$. By analytic continuation we may extend $z$ to all of $U_p$. We write 
$$
\quad P_2=\frac{\rho}{2}\d z^2
$$  
for some holomorphic function $\rho$ on $U_p$, where, by definition, we have
$$
\frac{\rho}{2}=\left\{G,z\right\}.
$$
Recall the following classical fact.
\begin{thm}\label{hillex}
Let $\rho$ be a holomorphic function on a simply connected domain $\Omega$ in the complex plane. Then for every prescription of $w(z_0)$ and $(\partial_z w)(z_0)$ at some point $z_0 \in \Omega$, there exists a unique holomorphic function $w$ on $\Omega$ satisfying Hill's equation with potential $\rho/4$
\begin{equation}\label{hill}
\partial_{zz}w+ \frac{\rho}{4}w=0.
\end{equation}
\end{thm}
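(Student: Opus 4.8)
The plan is to recognize Theorem~\ref{hillex} as the standard existence-and-uniqueness theorem for linear second-order ordinary differential equations in the complex domain, applied to the holomorphic (hence real-analytic) coefficient function $\rho/4$. Since $\Omega$ is simply connected and $\rho$ is holomorphic on all of $\Omega$, the equation~\eqref{hill} has no singular points, so every local holomorphic solution extends to a single-valued holomorphic function on all of $\Omega$. I would therefore structure the proof in two movements: first establish a local holomorphic solution with the prescribed initial data near $z_0$, and then use the monodromy theorem together with simple connectivity to globalize it.

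First I would fix $z_0 \in \Omega$ and seek a solution as a power series $w(z)=\sum_{k\geqslant 0}c_k(z-z_0)^k$, with $c_0=w(z_0)$ and $c_1=(\partial_z w)(z_0)$ prescribed. Substituting into~\eqref{hill} and expanding $\rho/4=\sum_{k\geqslant 0}b_k(z-z_0)^k$ as a convergent power series yields the recursion
\begin{equation*}
(k+2)(k+1)c_{k+2}=-\sum_{j=0}^{k}b_{j}c_{k-j},
\end{equation*}
which determines $c_k$ for all $k\geqslant 2$ uniquely from $c_0,c_1$ and the $b_j$. A routine majorant (Cauchy) estimate, bounding $|b_j|\leqslant M r^{-j}$ on a disc of radius $r$ about $z_0$ contained in $\Omega$, shows that the formal series has positive radius of convergence and hence defines a genuine holomorphic solution on a neighbourhood of $z_0$; uniqueness of the solution with given $c_0,c_1$ is immediate from the recursion.

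Next I would globalize. Given any point $q\in\Omega$ and any path from $z_0$ to $q$, the local solution can be analytically continued along that path: at each point of the path one has a holomorphic coefficient, so the local existence argument furnishes an overlapping holomorphic solution agreeing with the previous one on the overlap, and these patch together by uniqueness. Because $\rho$ is holomorphic throughout $\Omega$ with no singularities, continuation is unobstructed along every path. The monodromy theorem then guarantees that continuation along homotopic paths gives the same result; since $\Omega$ is simply connected, all paths from $z_0$ to $q$ are homotopic, so the continuation is path-independent and yields a well-defined single-valued holomorphic function $w$ on $\Omega$ solving~\eqref{hill} with the prescribed initial data.

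The one point deserving care -- and the only genuine obstacle -- is the passage from a purely local solution to a global single-valued one, since a priori analytic continuation of solutions of a linear ODE around loops can produce nontrivial monodromy (this is exactly what happens for equations with singular points, where the solution space carries a monodromy representation of $\pi_1$). Here that obstruction vanishes for two reasons: $\Omega$ is simply connected, so $\pi_1(\Omega)$ is trivial and there are no loops to generate monodromy; and $\rho$ is holomorphic on all of $\Omega$, so the equation is everywhere regular and continuation never meets an obstruction. I would emphasize that these two hypotheses are precisely what make the global statement work, and otherwise the proof is the classical power-series-plus-monodromy argument, which I would cite to a standard reference on linear ODE in the complex domain rather than reproduce in full.
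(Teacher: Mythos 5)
Your proof is correct, and it is essentially the argument the paper relies on: the paper does not prove Theorem~\ref{hillex} itself but cites the standard reference (Lehto, \emph{Univalent functions and Teichm\"uller spaces}, p.~53), where exactly this classical power-series local existence argument followed by analytic continuation and the monodromy theorem on the simply connected domain $\Omega$ is carried out. Your recursion $(k+2)(k+1)c_{k+2}=-\sum_{j=0}^{k}b_{j}c_{k-j}$, the majorant estimate, and the globalization step are all accurate, so there is nothing to correct.
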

For a proof see for instance~\cite[p.~53]{MR867407}. By a straightforward power series coefficient comparison argument, we also obtain: 
\begin{lem}\label{locunifconv}
Let $D\subset \C$ be the open unit disk. For every holomorphic function $\rho$ on $D$ and for all $w_0,w^{\prime}_0 \in \C$, the sequence $w_n : D \to \C$ with $w_n$ being the unique solution of   
$$
\partial^2_{zz}w_n+\frac{\rho_n}{4} w_n=0, \quad w_n(0)=w_0, \quad (\partial_z w_n)(0)=w^{\prime}_0, 
$$
converges locally uniformly to the unique solution of 
$$
\partial^2_{zz}w+\rho w=0, \quad w(0)=w_0, \quad (\partial_z w)(0)=w^{\prime}_0, 
$$
where $\rho_n$ denotes the Taylor series around $0$ of $\rho$ up to order $n$. 
\end{lem}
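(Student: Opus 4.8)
The plan is to pass to Taylor coefficients at the origin and exploit that truncating the potential at order $n$ leaves the low-order recursion for the solution untouched. First I would write $\rho(z)=\sum_{j\geqslant 0}r_jz^j$, so that $\rho_n$ has the same coefficients $r_j$ for $j\leqslant n$ and zero beyond, and expand the solutions as $w(z)=\sum_{m\geqslant 0}a_mz^m$ and $w_n(z)=\sum_{m\geqslant 0}a_m^{(n)}z^m$. Substituting into Hill's equation~\eqref{hill} and comparing coefficients yields the recursion $a_{m+2}=-\frac{1}{4(m+2)(m+1)}\sum_{j=0}^{m}r_ja_{m-j}$, together with the analogous recursion for $a_m^{(n)}$ in which the inner sum is truncated at $j=\min(m,n)$. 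Both solutions share the initial data $a_0^{(n)}=a_0=w_0$ and $a_1^{(n)}=a_1=w^{\prime}_0$.

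The first, easy, observation is that the two recursions coincide as long as $m\leqslant n$, since then $\min(m,n)=m$; an immediate induction gives $a_m^{(n)}=a_m$ for all $m\leqslant n+2$. Hence $w_n-w=\sum_{m\geqslant n+3}\bigl(a_m^{(n)}-a_m\bigr)z^m$, so that $w_n$ and $w$ agree to order $n+2$ at the origin.

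The substantive step is to upgrade this agreement to local uniform convergence, for which I need a bound on the tail coefficients that is uniform in $n$. Fix $r<s<1$. By Cauchy's estimate for $\rho$ on the circle $|z|=s$ one has $|r_j|\leqslant M s^{-j}$ with $M=\sup_{|z|\leqslant s}|\rho|$. I would then define a real majorant sequence by $b_0=|w_0|$, $b_1=|w^{\prime}_0|$ and $b_{m+2}=\frac{1}{4(m+2)(m+1)}\sum_{j=0}^{m}Ms^{-j}b_{m-j}$. Since truncation only deletes nonnegative summands, induction gives $|a_m^{(n)}|\leqslant b_m$ for every $n$, and likewise $|a_m|\leqslant b_m$. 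Crucially, the generating function $B(z)=\sum_{m\geqslant 0}b_mz^m$ satisfies the comparison equation $B^{\prime\prime}=\frac{M}{4(1-z/s)}B$, a linear second order equation with holomorphic potential on the simply connected disk $\{|z|<s\}$; by \cref{hillex} applied there, $B$ is the Taylor expansion of a holomorphic solution on $\{|z|<s\}$ and therefore $\sum_{m}b_mr^m<\infty$.

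With the uniform majorant in hand the conclusion follows: for $|z|\leqslant r$ we have
\[
\sup_{|z|\leqslant r}|w_n(z)-w(z)|\leqslant\sum_{m\geqslant n+3}\bigl(|a_m^{(n)}|+|a_m|\bigr)r^m\leqslant 2\sum_{m\geqslant n+3}b_mr^m,
\]
and the right-hand side tends to $0$ as $n\to\infty$ because the series $\sum_{m}b_mr^m$ converges. As every compact subset of $D$ lies in some $\{|z|\leqslant r\}$ with $r<1$, this yields the asserted local uniform convergence. The main obstacle is precisely this final uniformity: the coefficientwise agreement up to order $n+2$ is elementary, but without the $n$-independent majorant one cannot control the contribution of the high-order coefficients, so the crux of the argument is producing the comparison series $B$ and verifying that its radius of convergence reaches arbitrarily close to the boundary of $D$.
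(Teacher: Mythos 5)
Your proposal is correct and takes essentially the same approach as the paper: the paper gives no written proof of \cref{locunifconv}, asserting only that it follows ``by a straightforward power series coefficient comparison argument'', and your coefficient recursion, the agreement $a^{(n)}_m=a_m$ for $m\leqslant n+2$, and the $n$-independent Cauchy majorant $B$ solving $B^{\prime\prime}=\frac{M}{4(1-z/s)}B$ on $\{|z|<s\}$ are exactly the standard details behind that assertion. (You also rightly worked with $\partial^2_{zz}w+\frac{\rho}{4}w=0$ throughout; the missing factor $\frac{1}{4}$ in the paper's statement of the limit equation is an evident typo and does not affect the convergence claim.)
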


Hill's equation is linear and by Theorem~\eqref{hillex} its space of solutions is complex two-dimensional. Let therefore $w_1,w_2 : \Omega \to \C$ be a pair of linearly independent solutions. Their Wronskian 
$$
W(w_1,w_2)=w_1\partial_z w_2-w_2\partial_z w_1
$$
is a non-zero constant $W$ by Abel's identity, and $w_1$ and $w_2$ cannot vanish simultaneously. Writing $\hat{G}=\frac{w_1}{w_2}$, we obtain
$$
\partial_z \hat{G}=\frac{w_2\partial_z w_1-w_1\partial_z w_2}{(w_2)^2}=-\frac{W}{(w_2)^2}
$$
so that $\hat{G}$ is a locally injective meromorphic map except possibly on the zero locus of $w_2$. Computing $\partial_z(1/\hat{G})$ we see that $\hat{G}$ is locally injective away from there zero locus of $w_1$, and thus everywhere. A simple computation now gives
$$
\{\hat{G},z\}=-2\frac{\partial^2_{zz}w_2}{w_2}=\frac{\rho}{2},
$$
where we have used the fact that $w_2$ solves~\eqref{hill}. From the cocycle property~\eqref{cocycle} of the Schwarzian derivative we see that $\hat{G}$ differs from the stereographically projected Gauss map by a M\"obius transformation. In particular, on $U_p$, there exists a pair of solutions $w_1,w_2$ of~\eqref{hill} -- unique up to rescaling by a common constant -- so that $G=w_{1}/w_{2}$. Since $G$ does never attain the value $\infty$ on $U_p$ the function $w_2$ is non vanishing on $U_p$. Fix such a pair $(w_1,w_2)$ and let $w_{1,n}$ and $w_{2,n}$ be the unique solutions on $U_p$ to Hill's equation with potential $\rho_n/4$ which satisfy
$$
w_{i,n}(p)=w_i(p), \quad \text{and} \quad (\partial_z w_{i,n})(p)=(\partial_z w_i)(p). 
$$
Here $\rho_n$ denotes the power series of $\rho$ around $p$ of order $(n-3)$ with respect to the coordinate $z$. 
Note that it follows from~\eqref{relheighthopf} and \cref{osswei} that we may recover $\bx : U_p \to \E^3$ by taking the Weierstrass data $(G,-G/(\partial_z G)\d z)$ on $U_p$. However, we may also take $(G_n,-G_n/(\partial_z G_n)\d z)$ as Weierstrass data on $U_p$ where $G_n$ are the locally injective meromorphic functions on $U_p$ defined by
$$
G_n=\frac{w_{1,n}}{w_{2,n}}
$$ 
After possibly shrinking $U_p$ again, we can assume that the functions $G_n$ are holomorphic for sufficiently large $n$, and hence by applying \cref{osswei} again we obtain a sequence of minimal immersions $\left(\bx_n : U_p \to \E^3\right)_{n\geqslant n_0}$.

Recall that $z$ is chosen such that $Q=\d z^2$ and therefore $\gamma\equiv 0$ in \eqref{eqforgamma}. Together with \eqref{defopcoord} this implies that we have
\[
P_{\ell}^{\bx_n}=\frac{1}{2}\partial^{(\ell-2)}_z\rho_n\d z^{\ell},
\]
where $\partial^{(\ell-2)}_z$ denotes the partial derivative of order $(\ell-2)$ with respect to $z$ and $(P_{\ell}^{\bx_n})_{\ell \geqslant 2}$ denote the entropy differentials of the minimal immersion $\bx_n : U_p \to \E^3$. By construction, $\rho_n$ is a polynomial in $z$ of order $(n-3)$ and hence the derivative of $\rho_n$ with respect to $z$ of order $(n-2)$ vanishes identically. This implies that $P_{n}^{\bx_n}$ vanishes identically and hence $\bx_n$ has degree $n$. 

By \cref{locunifconv}, the functions $w_{i,n}$ converge locally uniformly to $w_i$ as $n$ tends to infinity and since $w_2$ is non-vanishing, $G_n$ converges locally uniformly to $G$ as $n$ tends to infinity. Since $\bx_n$ arises by integration from the Weierstrass data $(G_n,-G_n/(\partial_z G_n)\d z)$, it follows that $\Vert\bx_n-\bx\Vert_{g_{Eucl}}$ converges to zero locally uniformly as $n$ tends to infinity. This proves \cref{thmb}.   


\appendix

\section{Higher order Schwarzian derivatives}\label{appa}

For background about the Schwarzian derivative we refer to reader to~\cite{MR2177471,MR2489717}. We have defined the Schwarzian derivative with respect to a conformal connection. However, less geometric structure is required on a Riemann surface $(M,[g])$ to define the Schwarzian derivative. All one needs is a M\"obius structure~\cite{MR1656822}, which is a generalization of the classical notion of a complex projective structure. A M\"obius structure is a linear second order differential operator which can be written as the sum of the symmetrized trace-free Hessian of a $[g]$-conformal connection $\nabla$ and the real part of a quadratic differential $Q$
$$
\mathrm{H}=\mathrm{Sym}_0(\mathrm{Hess}(\nabla))+\Re(Q). 
$$
The differential operator $\mathrm{H}$ acts on densities of weight $-1/2$ on $M$ and takes values in the symmetric and $[g]$-traceless covariant $2$-tensor fields of weight $-1/2$, i.e.
$$
\mathrm{H} : \Gamma\left(\Lambda^2(T^*M)^{-1/2}\right) \to \Gamma\left(S^2_0(T^*M)\otimes \Lambda^2(T^*M)^{-1/2}\right).
$$ The Schwarzian derivative of a local biholomorphism $\varphi : (M,[g],\mathrm{H}) \to (M^{\prime},[g]^{\prime},\mathrm{H}^{\prime})$ between Riemann surfaces equipped with M\"obius structures is then defined to be
$
\varphi^*\mathrm{H}^{\prime}-\mathrm{H} 
$,
where the pullback is defined in the obvious way. The classical definition of the Schwarzian derivative is recovered by taking both $(M,[g])$ and $(M^{\prime},[g]^{\prime})$ to be $\hat{\C}$ and $\mathrm{H}$ the M\"obius structure associated to the Levi-Civita connection of the spherical metric.  

A M\"obius structure $\mathrm{H}$ on $(M,[g])$ is called flat if in a neighbourhood of every point of $M$ there exists a local holomorphic coordinate $z$ so that 
\begin{equation}\label{flatmoebcoord}
\mathrm{H}=\mathrm{Re}(\partial^2_{zz})
\end{equation}
with respect to the canonical local trivialisations of the vector bundles $\Lambda^2(T^*M)^{-1/2}$ and $S^2(T^*M)\otimes \Lambda^2(T^*M)^{-1/2}$ induced by $z$. We call such a coordinate a local $\mathrm{H}$-adapted coordinate. It is straightforward to check that any two overlapping adapted holomorphic coordinates are related by a M\"obius transformation. 

Recall that a complex projective structure on a Riemann surface $(M,[g])$ is a (maximal) atlas of charts mapping open sets into $\mathbb{CP}^1$ so that transition functions are (restrictions) of M\"obius transformations. Therefore, a flat M\"obius structure induces a complex projective structure and conversely every complex projective structure induces a flat M\"obius structure by defining $\mathrm{H}$ as in~\eqref{flatmoebcoord}. We refer the reader to~\cite{MR1656822} for additional details on M\"obius structures and to~\cite{MR2497780} for complex projective structures.   

A crucial difference between having a conformal connection at hand versus a M\"obius structure only, is when it comes to defining higher order Schwarzian derivatives. The  operators $\mathrm{S}^{\mathrm{H}}_{\ell}$ from \cref{schwarzmoeb} below are well defined on a Riemann surface $(M,[g])$ equipped with a (flat) M\"obius structure. Although they arise as derivatives of the Schwarzian derivative -- and hence may be thought of as higher order Schwarzian derivatives -- for $\ell \geqslant 3$ they all do lack the invariance under post-composition by a M\"obius transformation.  
\begin{ppn}\label{schwarzmoeb}
Let $(M,[g])$ be a Riemann surface equipped with a flat M\"obius structure $\mathrm{H}$. Then for $\ell \geqslant 2$ there exists a differential operator $\mathrm{S}^{\mathrm{H}}_{\ell}$ of order $\ell+1$ mapping locally injective meromorphic functions on $M$ to holomorphic differentials of order $\ell$. In a local $\mathrm{H}$-adapted coordinate $z$ on $M$ the operators are given by
$$
\mathrm{S}^{\mathrm{H}}_{\ell+1}(f)=\left(\partial_z\left(s_{\ell}(f)\right)-\ell s_{\ell}(f)\frac{\partial_{zz}^2f}{\partial_z f}\right)\d z^{\ell+1}
$$
with 
$$\mathrm{S}^{\mathrm{H}}_{2}(f)=\left(\frac{\partial^3_{zzz}f}{\partial_z f}-\left(\frac{\partial^2_{zz}f}{\partial_z f}\right)^2\right)\d z^2
$$
where we write $\mathrm{S}^{\mathrm{H}}_{\ell}(f)=s_{\ell}(f)\d z^{\ell}$.
\end{ppn}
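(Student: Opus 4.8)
The plan is to recognise the coordinate recursion in the statement as the coordinate shadow of two manifestly coordinate-free operations: the Schwarzian $\mathrm{S}^{\mathrm{H}}_2(f)$, which is a genuine quadratic differential on $M$, followed by repeated application of the $(1,0)$-part of a Weyl connection canonically attached to $f$. Once both ingredients are exhibited as intrinsic objects on $M$, well-definedness -- that is, independence of the chosen $\mathrm{H}$-adapted coordinate, which by the remark preceding the statement differs from any other only by a M\"obius transformation -- becomes automatic, and holomorphy together with the order count can then be read off from the coordinate formula.

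First I would attach to a locally injective meromorphic function $f$ the connection $\nabla_f=f^*\nabla_0$, where $\nabla_0$ denotes the flat connection on $\C\subset \hat{\C}$ in the standard affine chart. Since $f$ is a local biholomorphism, wherever $f\neq \infty$ this pullback is a torsion-free connection on $TM$; because $f$ is moreover holomorphic, $\nabla_f$ is $J$-linear and hence a Weyl connection for $[g]$. Applying the change-of-coordinate law~\eqref{changecoord} to $w=f(z)$ with $\hat{\gamma}=0$, I would compute that the connection symbol of $\nabla_f$ in an arbitrary holomorphic coordinate $z$ is $\gamma_f=\partial^2_{zz}f/\partial_z f$. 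By~\eqref{defopcoord} the induced operator on $L^{\ell}$ is therefore $\nabla^{1,0}_f(s\,\d z^{\ell})=\left(\partial_z s-\ell\, s\,\partial^2_{zz}f/\partial_z f\right)\d z^{\ell+1}$, which is exactly the bracket appearing in the recursion. At the poles of $f$ one works in the chart $1/w$ near $\infty$ (equivalently, replaces $f$ by $1/f$), and the resulting differentials extend meromorphically; since $f$ is locally injective, $\gamma_f$ inherits only the singularities of $f$, so the extension is holomorphic wherever $f$ is.

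Next I would set $\mathrm{S}^{\mathrm{H}}_{\ell}(f):=(\nabla^{1,0}_f)^{\ell-2}\bigl(\mathrm{S}^{\mathrm{H}}_2(f)\bigr)$, with $\mathrm{S}^{\mathrm{H}}_2(f)=f^*\mathrm{H}_{\mathrm{std}}-\mathrm{H}$ the Schwarzian of $f$ relative to the flat M\"obius structure on $\hat{\C}$. The Schwarzian is a quadratic differential on $M$ by construction and $\nabla_f$ is an honest connection, so each $\mathrm{S}^{\mathrm{H}}_{\ell}(f)$ is an intrinsic section of $L^{\ell}$; this yields the asserted well-definedness. To match the displayed formulas I would then check that in an $\mathrm{H}$-adapted coordinate $z$, where $\mathrm{H}=\mathrm{Re}(\partial^2_{zz})$ by~\eqref{flatmoebcoord}, the Schwarzian reduces to the classical coordinate expression $\left(\partial^3_{zzz}f/\partial_z f-(\partial^2_{zz}f/\partial_z f)^2\right)\d z^2$, and finally apply the operator $\nabla^{1,0}_f$ from the previous step to recover the stated recursion $s_{\ell+1}=\partial_z s_{\ell}-\ell\, s_{\ell}\,\partial^2_{zz}f/\partial_z f$.

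Holomorphy and the order statement follow by induction. The base $\mathrm{S}^{\mathrm{H}}_2(f)$ is holomorphic, being the Schwarzian of a locally injective meromorphic function, and has differential-operator order $3$. In an $\mathrm{H}$-adapted coordinate $\nabla^{1,0}_f$ is $\partial_z$ followed by subtraction of a multiple of the holomorphic function $\partial^2_{zz}f/\partial_z f$; both operations preserve holomorphy and raise the differential order by one, so $\mathrm{S}^{\mathrm{H}}_{\ell}(f)$ is holomorphic of degree $\ell$ and the operator $\mathrm{S}^{\mathrm{H}}_{\ell}$ has order $\ell+1$. The main obstacle I anticipate is conceptual rather than computational: one must keep carefully apart two kinds of dependence. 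The connection $\nabla_f$, and hence every operator with $\ell\geqslant 3$, depends on the affine chart of $\hat{\C}$ and so fails to be invariant under post-composition of $f$ by a M\"obius transformation (as flagged in the preceding remark), since $\mu^*\nabla_0\neq\nabla_0$ for non-affine $\mu$, whereas $f^*\mathrm{H}_{\mathrm{std}}-\mathrm{H}$ is unchanged because $\mu^*\mathrm{H}_{\mathrm{std}}=\mathrm{H}_{\mathrm{std}}$. This is dependence on the \emph{target}; it does not disturb independence from the \emph{source} coordinate $z$, which is all that well-definedness on $M$ requires. Separating these two features cleanly is the delicate point, and an alternative to the connection-theoretic argument -- verifying the M\"obius transformation law of $s_{\ell}$ directly by induction -- is available but considerably more laborious.
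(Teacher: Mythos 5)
Your proposal is correct, but it proves well-definedness by a genuinely different mechanism than the paper. The paper's proof is a direct induction in adapted coordinates: assuming $s_{\ell}$ transforms as a degree-$\ell$ differential under the M\"obius coordinate changes~\eqref{coordchange}, it computes the two terms of the recursion in the new coordinate (equations~\eqref{appida} and~\eqref{appidb}) and checks that the anomalous terms proportional to $2c/(cz+d)$ cancel, the base case being the classical M\"obius invariance of the Schwarzian. You instead recognise the bracket in the recursion as the operator~\eqref{defopcoord} for the $f$-dependent Weyl connection $\nabla_f=f^*\nabla_0$: your derivation of the symbol from~\eqref{changecoord} is right (setting $\hat{\gamma}=0$ in the coordinate $w=f(z)$ forces $\gamma=-(\partial_z w)^2\partial^2_{ww}z=\partial^2_{zz}f/\partial_z f$), so $\mathrm{S}^{\mathrm{H}}_{\ell}(f)=(\nabla^{1,0}_f)^{\ell-2}\mathrm{S}^{\mathrm{H}}_2(f)$ is an intrinsic section of $L^{\ell}$ and coordinate-independence is automatic, the inductive cancellation being absorbed into machinery the paper already set up in Section~2. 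What each buys: the paper's computation is elementary and self-contained within adapted charts; your route explains \emph{why} the recursion has this shape (covariant differentiation with respect to a flat Weyl connection pulled back through $f$), works in arbitrary holomorphic coordinates, and makes structurally transparent the point the paper only flags in a remark -- the operators with $\ell\geqslant 3$ fail to be invariant under post-composition because $\mu^*\nabla_0\neq\nabla_0$ for non-affine $\mu$, while independence of the \emph{source} coordinate, which is all the proposition asserts, is untouched.

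Two small caveats. First, your parenthetical ``equivalently, replaces $f$ by $1/f$'' at poles is not an equivalence for $\ell\geqslant 3$, for exactly the reason you identify elsewhere: $\mathrm{S}^{\mathrm{H}}_{\ell}(1/f)\neq\mathrm{S}^{\mathrm{H}}_{\ell}(f)$ in general. Your fallback argument is the correct one: $\gamma_f=\partial^2_{zz}f/\partial_z f$ is meromorphic with a simple pole of residue $-2$ at a pole of $f$, so the $s_{\ell}$ extend meromorphically and are holomorphic wherever $f$ is; indeed $s_3=\partial_z s_2-2s_2\gamma_f$ genuinely acquires a simple pole (residue $4s_2(0)$) at a pole of $f$ unless $s_2$ vanishes there, so your qualified holomorphy claim is more accurate than the statement's bare ``holomorphic differentials'' -- a point on which the paper's proof is silent. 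Second, the printed coordinate expression for $\mathrm{S}^{\mathrm{H}}_2$, which you echo, should carry the coefficient $\tfrac{3}{2}$ on the squared term (the classical Schwarzian): with coefficient $1$ neither the base-case M\"obius invariance nor your identification $\mathrm{S}^{\mathrm{H}}_2(f)=f^*\mathrm{H}_{\mathrm{std}}-\mathrm{H}$ holds, and the paper's displayed formula for $\mathrm{S}^{\mathrm{H}}_3$ is consistent only with the $\tfrac{3}{2}$ normalisation. This is a typo inherited from the statement, not a flaw in your argument.
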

\begin{rmk}
These operators can also be defined in the case where $\mathrm{H}$ is not flat, for the sake of brevity we will however not show this here. In local coordinates, they appeared in~\cite{MR1706981}. 
\end{rmk}
For instance, in a local $\mathrm{H}$-adapted coordinate $z$ we have
$$
\mathrm{S}^{\mathrm{H}}_3(f)=\left[\frac{\partial^4_{zzzz}f}{\partial_z f}-6\,\frac{(\partial^3_{zzz}f)(\partial^2_{zz}f)}{(\partial_z f)^2}+6\left(\frac{\partial^2_{zz}f}{\partial_z f}\right)^3\right]\d z^3.
$$
The reader may easily verify that $\mathrm{S}^{\mathrm{H}}_3(f)$ is not invariant under post-com\-po\-sition by a M\"obius transformation. For comparison, we mention that if $\nabla$ is a $[g]$-conformal connection admitting a local holomorphic coordinate $z$ in which $\frac{1}{2}\gamma^2-\partial_z \gamma=0$, then we have
$$
\mathrm{S}^{\nabla}_3(f)=\left[\frac{\partial^4_{zzzz}f}{\partial_z f}-4\,\frac{(\partial^3_{zzz}f)(\partial^2_{zz}f)}{(\partial_z f)^2}+3\left(\frac{\partial^2_{zz}f}{\partial_z f}\right)^3\right]\d z^3.
$$
\begin{proof}[Proof of \cref{schwarzmoeb}]
We only need to show that the definition of the operators $\mathrm{S}^{\mathrm{H}}_{\ell}$ is invariant under coordinate changes of the form
\begin{equation}\label{coordchange}
w=\frac{az+b}{cz+d}, \quad \text{with} \quad \left(\begin{array}{cc} a & b \\ c & d \end{array}\right)\in \mathrm{SL}(2,\C). 
\end{equation}
We have
$
\d w=\tau\d z$ and $\partial_w=\tau^{-1}\partial_z
$
with
$$
\tau=\frac{(ad-bc)}{(cz+d)^2}.
$$
The proof is by induction. Recall the (well-known) fact that the Schwarzian $\mathrm{S}^{\mathrm{H}}_2$ as defined above is invariant under coordinate changes of the form~\eqref{coordchange}. Let us therefore assume that $\mathrm{S}^{\mathrm{H}}_{\ell}$ is invariant under coordinate changes of the form~\eqref{coordchange}. We write $\mathrm{S}^{\mathrm{H}}_{\ell}(f)=\hat{s}_{\ell}(f)\d w^{\ell}=s_{\ell}(f)\d z^{\ell}$
so that
$$
\hat{s}_{\ell}(f)=\tau^{-\ell}s_{\ell}(f). 
$$
Now using $\partial_z \tau=-\frac{2c}{(cz+d)}\tau$, we obtain
\begin{equation}\label{appida}
\aligned
\partial_w\left(\hat{s}_{\ell}(f)\right)\d w^{\ell+1}&=\left(\tau^{-(\ell+1)}\partial_z (s_{\ell}(f))+\tau^{-1}s_{\ell}(f)\partial_z \tau^{-l}\right)\d w^{\ell+1}\\
&=\partial_{z}(s_{\ell}(f))\d z^{\ell+1}+s_{\ell}(f)\frac{2c}{(cz+d)}\ell \d z^{\ell+1},
\endaligned
\end{equation}
as well as
\begin{equation}\label{appidb}
\aligned
\ell\hat{s}_{\ell}(f)\frac{\partial_w\left(\partial_w f\right)}{\partial_w f}\d w^{\ell+1}&=\ell\tau^{-\ell}s_{\ell}(f)\frac{\partial_z\left(\tau^{-1}\partial_z f\right)}{\partial_z f}\tau^{\ell+1}\d z^{\ell+1}\\
&=\ell s_{\ell}(f)\frac{\partial^2_{zz}(f)}{\partial_z f}\d z^{\ell+1}+\ell \tau s_{\ell}(f)\tau^{-1}\frac{2c}{(cz+d)}\d z^{\ell+1}.
\endaligned
\end{equation}
Combining~\eqref{appida} and ~\eqref{appidb} proves the inductive step. 
\end{proof}
\begin{rmk}
The Schwarzian derivative can be generalized to higher dimensions from the conformal viewpoint~\cite{MR1676999} as well as from the (complex) projective viewpoint~\cite{MR1348154}. 
\end{rmk}

\providecommand{\mr}[1]{\href{http://www.ams.org/mathscinet-getitem?mr=#1}{MR~#1}}
\providecommand{\zbl}[1]{\href{http://www.zentralblatt-math.org/zmath/en/search/?q=an:#1}{zbM~#1}}
\providecommand{\arxiv}[1]{\href{http://www.arxiv.org/abs/#1}{arXiv:#1 }}
\providecommand{\doi}[1]{\href{http://dx.doi.org/#1}{DOI~#1}}
\providecommand{\href}[2]{#2}


\begin{thebibliography}{10}

\bibitem{MR0249622}
\bgroup\bfseries{}D.~Aharonov\egroup{}, A necessary and sufficient condition
  for univalence of a meromorphic function,  \emph{Duke Math. J.} \textbf{36}
  (1969), 599--604.

\bibitem{MR3667429}
\bgroup\bfseries{}J.~Bernstein\egroup{}, \bgroup\bfseries{}T.~Mettler\egroup{},
  Characterizing classical minimal surfaces via the entropy differential,
  \emph{J. Geom. Anal.} \textbf{27} (2017), 2235--2268.
  \doi{10.1007/s12220-017-9759-6}

\bibitem{MR867684}
\bgroup\bfseries{}A.~L. Besse\egroup{}, \emph{Einstein manifolds},
  \emph{Ergebnisse der Mathematik und ihrer Grenzgebiete (3)} \textbf{10},
  Springer-Verlag, Berlin, 1987. \doi{10.1007/978-3-540-74311-8}

\bibitem{MR1656822}
\bgroup\bfseries{}D.~M.~J. Calderbank\egroup{}, M\"obius structures and
  two-dimensional {E}instein-{W}eyl geometry,  \emph{J. Reine Angew. Math.}
  \textbf{504} (1998), 37--53. \doi{10.1515/crll.1998.111}

\bibitem{MR655419}
\bgroup\bfseries{}S.~S. Chern\egroup{}, \bgroup\bfseries{}R.~Osserman\egroup{},
  Remarks on the {R}iemannian metric of a minimal submanifold,  in
  \emph{Geometry {S}ymposium, {U}trecht 1980 ({U}trecht, 1980)}, \emph{Lecture
  Notes in Math.} \textbf{894}, Springer, Berlin-New York, 1981, pp.~49--90.

\bibitem{MR2497780}
\bgroup\bfseries{}D.~Dumas\egroup{}, Complex projective structures,  in
  \emph{Handbook of {T}eichm\"uller theory. {V}ol. {II}}, \emph{IRMA Lect.
  Math. Theor. Phys.} \textbf{13}, Eur. Math. Soc., Z\"urich, 2009,
  pp.~455--508. \doi{10.4171/055-1/13}

\bibitem{MR2004958}
\bgroup\bfseries{}U.~Hertrich-Jeromin\egroup{}, \emph{Introduction to
  {M}\"obius differential geometry}, \emph{London Mathematical Society Lecture
  Note Series} \textbf{300}, Cambridge University Press, Cambridge, 2003.
  \doi{10.1017/CBO9780511546693}

\bibitem{MR1013786}
\bgroup\bfseries{}H.~Hopf\egroup{}, \emph{Differential geometry in the large},
  second ed., \emph{Lecture Notes in Mathematics} \textbf{1000},
  Springer-Verlag, Berlin, 1989. \doi{10.1007/3-540-39482-6}

\bibitem{MR683761}
\bgroup\bfseries{}L.~P. Jorge\egroup{}, \bgroup\bfseries{}W.~H. Meeks,
  III\egroup{}, The topology of complete minimal surfaces of finite total
  {G}aussian curvature,  \emph{Topology} \textbf{22} (1983), 203--221.
  \doi{10.1016/0040-9383(83)90032-0}

\bibitem{karcher}
\bgroup\bfseries{}H.~Karcher\egroup{}, \emph{Construction of minimal surfaces},
  \emph{Surveys in {G}eometry}, University of {T}okyo, 1989.

\bibitem{MR552581}
\bgroup\bfseries{}K.~Kenmotsu\egroup{}, Weierstrass formula for surfaces of
  prescribed mean curvature,  \emph{Math. Ann.} \textbf{245} (1979), 89--99.
  \doi{10.1007/BF01428799}

\bibitem{arxiv:dg-ga/9512003}
\bgroup\bfseries{}R.~Kusner\egroup{}, \bgroup\bfseries{}N.~Schmitt\egroup{},
  The spinor representation of minimal surfaces, 1995.

\bibitem{MR3885861}
\bgroup\bfseries{}H.~Lee\egroup{}, The uniqueness of {E}nneper's surfaces and
  {C}hern--{R}icci functions on minimal surfaces,  \emph{Complex Var. Elliptic
  Equ.} \textbf{64} (2019), 126--131. \doi{10.1080/17476933.2017.1423478}

\bibitem{MR867407}
\bgroup\bfseries{}O.~Lehto\egroup{}, \emph{Univalent functions and
  {T}eichm\"uller spaces}, \emph{Graduate Texts in Mathematics} \textbf{109},
  Springer-Verlag, New York, 1987. \doi{10.1007/978-1-4613-8652-0}

\bibitem{MR1348154}
\bgroup\bfseries{}R.~Molzon\egroup{}, \bgroup\bfseries{}K.~P.
  Mortensen\egroup{}, The {S}chwarzian derivative for maps between manifolds
  with complex projective connections,  \emph{Trans. Amer. Math. Soc.}
  \textbf{348} (1996), 3015--3036. \doi{10.1090/S0002-9947-96-01590-5}

\bibitem{MR1676999}
\bgroup\bfseries{}B.~Osgood\egroup{}, \bgroup\bfseries{}D.~Stowe\egroup{}, The
  {S}chwarzian derivative, conformal connections, and {M}\"obius structures,
  \emph{J. Anal. Math.} \textbf{76} (1998), 163--190. \doi{10.1007/BF02786934}

\bibitem{MR0179701}
\bgroup\bfseries{}R.~Osserman\egroup{}, Global properties of minimal surfaces
  in {$E^{3}$} and {$E^{n}$},  \emph{Ann. of Math. (2)} \textbf{80} (1964),
  340--364.

\bibitem{MR2177471}
\bgroup\bfseries{}V.~Ovsienko\egroup{},
  \bgroup\bfseries{}S.~Tabachnikov\egroup{}, \emph{Projective differential
  geometry old and new}, \emph{Cambridge Tracts in Mathematics} \textbf{165},
  Cambridge University Press, Cambridge, 2005, From the Schwarzian derivative
  to the cohomology of diffeomorphism groups.

\bibitem{MR2489717}
\bgroup\bfseries{}V.~Ovsienko\egroup{},
  \bgroup\bfseries{}S.~Tabachnikov\egroup{}, What is {$\dots$}the {S}chwarzian
  derivative?,  \emph{Notices Amer. Math. Soc.} \textbf{56} (2009), 34--36.

\bibitem{MR1014929}
\bgroup\bfseries{}U.~Pinkall\egroup{}, \bgroup\bfseries{}I.~Sterling\egroup{},
  On the classification of constant mean curvature tori,  \emph{Ann. of Math.
  (2)} \textbf{130} (1989), 407--451. \doi{10.2307/1971425}

\bibitem{MR1706981}
\bgroup\bfseries{}E.~Schippers\egroup{}, Distortion theorems for higher order
  {S}chwarzian derivatives of univalent functions,  \emph{Proc. Amer. Math.
  Soc.} \textbf{128} (2000), 3241--3249. \doi{10.1090/S0002-9939-00-05623-9}

\bibitem{MR1386108}
\bgroup\bfseries{}H.~Tamanoi\egroup{}, Higher {S}chwarzian operators and
  combinatorics of the {S}chwarzian derivative,  \emph{Math. Ann.} \textbf{305}
  (1996), 127--151. \doi{10.1007/BF01444214}

\end{thebibliography}
\end{document}